\documentclass[12pt]{amsart}
\usepackage{amsmath,amsthm,amscd,amsfonts,amssymb,graphicx,color,mathabx,enumitem,mathtools,comment,amsrefs,tikz}
\usepackage[normalem]{ulem}

    \makeatletter
    \def\l@subsection{\@tocline{2}{0pt}{2.9pc}{5pc}{}}
    \def\l@subsubsection{\@tocline{2}{0pt}{5pc}{7.5pc}{}}
    \makeatother
\usepackage[margin=2.8cm]{geometry}

\usepackage{mathabx}
\usepackage[bookmarksnumbered, colorlinks, plainpages]{hyperref}
    \hypersetup{colorlinks=true, linkcolor=blue, citecolor=magenta, filecolor=magenta, urlcolor=cyan}
\allowdisplaybreaks
    \mathtoolsset{showonlyrefs,showmanualtags} 
\theoremstyle{plain} % definition 
\newtheorem{lemma}[equation]{Lemma} 
\newtheorem{proposition}[equation]{Proposition} 
\newtheorem{theorem}[equation]{Theorem}

\theoremstyle{definition}
\newtheorem{definition}[equation]{Definition} 

\theoremstyle{remark}
\newtheorem{remark}[equation]{Remark}

\numberwithin{equation}{section}

\usepackage{scalerel} 
\DeclareMathOperator*{\Expectation}{\scalerel*{\mathbb{E}}{\textstyle\sum}}

\usepackage{amsrefs} 
\begin{document}
\title[Cantor Szemer\'edi Theorem]{Szemer\'edi's Theorem Along Cantor Sets of Integers}

\author[Burgin]{Alex Burgin} 
    \address{School of Mathematics, Georgia Institute of Technology, Atlanta GA 30332, USA}
    \email{alexander.burgin@gatech.edu}
    \thanks{Research of AB supported in part by the Department of Education Graduate Assistance in Areas of National Need program at the Georgia Institute of Technology (Award \# P200A240169)}

\author[Fragkos]{Anastasios Fragkos}
\address{ School of Mathematics, Georgia Institute of Technology, Atlanta GA 30332, USA}
    \email {anastasiosfragkos@gatech.edu}

\author[Lacey]{Michael T. Lacey} 
    \address{ School of Mathematics, Georgia Institute of Technology, Atlanta GA 30332, USA}
    \email {lacey@math.gatech.edu}
    \thanks{Research of MTL and AB supported in part by grant  from the US National Science Foundation, DMS-2247254.}

 \author[Mena]{Dario Mena}
    \address{Centro de Investigaci\'{o}n en Matem\'{a}tica Pura y Aplicada \\ Escuela de Matem\'{a}tica, Universidad de Costa Rica}
    \email{dario.menaarias@ucr.ac.cr}
    \thanks{Research of DM partially supported by grant C1012 from Universidad de Costa Rica}

\author[Reguera]{Maria Carmen Reguera} \address{Universidad de M\'alaga}
    \email{m.reguera@uma.es}
    \thanks{Research of MCR supported by the Spanish Ministry of Science and Innovation through the projects RYC2020-030121-IAEI/10.13039/501100011033/ and PID2022-136619NB-I00 funded by MCIN/AEI/10.13039/501100011033/FEDER, UE.}
\begin{abstract}
    Let $\mathcal C= \{k_1<k_2 < \cdots\}$ be Cantor set of integers, that is a set of integers with restricted digits modulo a base $b$, and suppose $0$ is one of the restricted digits.  
We show that  
\begin{equation}
\liminf_N \Expectation_{n\in [N]}  m(A\cap T^{-k_n} A \cap \cdots 
        \cap T^{-\ell k_n} A )>0. 
\end{equation}
This is an extension of the IP Ergodic Theorem of Furstenberg and Katznelson, 
and a partial extension of recent work of Kra and Shalom. 
In particular, this implies that for any subset of integers $A$ of positive upper Banach density, 
there is a set $B$ of integers $n$ of positive lower Banach density 
such that $A$ contains an $\ell+1$ term progression, with step size $k_n$, where $n\in B$.    
\end{abstract}

\maketitle

\tableofcontents 

\section{Introduction}

A subset $A$ of the integers $\mathbb N$ is said to have \emph{positive  upper density} if 
\begin{equation}
    \limsup_{N\to\infty}  \frac{\lvert A\cap \{1, \ldots, N\} \rvert} N >0. 
\end{equation} 
And, $A$ has positive lower density if, replacing the limsup above by a liminf, the same inequality holds.

Recall that an \emph{IP-set} is built out of an infinite  set of  
 integers $\{r_1  ,  r_2 ,\cdots \}$ which need not be distinct. 
And then the IP-set is all distinct finite sums $ \sum_{n\in F} r_n $, 
where $F\subset \mathbb N$ is finite.  
This is an instance of the deep work of Furstenberg and Katznelson on the Szemeredi Theorem. 

\begin{theorem}
    Let $A \subset \mathbb N$ be set of positive upper density, and $ \{n_k\}$ an IP-set, and $t $ an integer. Then for some $k$, there is $a\in A$ so that 
    \begin{equation}
        a, a+n_k ,\ldots, a+(t-1)n_k \in A. 
    \end{equation}
\end{theorem}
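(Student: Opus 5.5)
The plan is to follow the Furstenberg--Katznelson route. First pass to a measure-preserving system via the Furstenberg correspondence principle, then prove an IP multiple recurrence theorem for that system.

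\textbf{Step 1: correspondence.} Since $A$ has positive upper density, the correspondence principle yields a probability space $(X,\mathcal B,m)$, an invertible measure-preserving $T$, and $\tilde A\in\mathcal B$ with $m(\tilde A)\ge \overline{d}(A)>0$. It also gives
\begin{equation}
\overline{d}\bigl(A\cap (A-n_1)\cap\cdots\cap(A-n_{t-1})\bigr)\ \ge\ m\bigl(\tilde A\cap T^{-n_1}\tilde A\cap\cdots\cap T^{-n_{t-1}}\tilde A\bigr)
\end{equation}
for all integers $n_1,\dots,n_{t-1}$. So it suffices to find an element $n$ of the IP-set with $m(\tilde A\cap T^{-n}\tilde A\cap\cdots\cap T^{-(t-1)n}\tilde A)>0$. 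Any $a$ in the corresponding intersection of shifts of $A$ then gives the progression.

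\textbf{Step 2: IP-limits.} Write $n_F=\sum_{i\in F}r_i$. By Hindman's theorem, or the Milliken--Taylor refinement, together with weak compactness of the unit ball of $L^2$, one can pass to a sub-IP-system along which every IP-limit of the form $\mathrm{IP}\text{-}\lim_F T^{jn_F}f$ exists weakly in $L^2$, for all $f$ and all $j\le t-1$. Then $P_j=\mathrm{IP}\text{-}\lim_F T^{jn_F}$ is an orthogonal projection onto the space of functions that are IP-rigid along this system. This gives a decomposition of $L^2$ into a rigid part and an IP-mixing part, relative to any factor.

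\textbf{Step 3: structure theory.} Build a transfinite tower of factors. The trivial factor is SZ-IP, meaning it satisfies $\mathrm{IP}\text{-}\lim_F m(B\cap T^{-n_F}B\cap\cdots)>0$ along some sub-IP-system for every set $B$ of positive measure. The goal is to show the SZ-IP property is preserved in three situations:
\begin{enumerate}[label=(\roman*)]
\item under primitive (IP-compact) extensions, which is the heart of the argument;
\item under relatively IP-mixing extensions, via a van der Corput lemma for IP-limits, which shows the relatively mixing component contributes nothing to the multiple average;
\item under inverse limits, by a routine approximation argument.
\end{enumerate}
A Zorn argument then gives a maximal SZ-IP factor. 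If this factor were proper, the relative rigid/mixing dichotomy of Step 2 would produce a nontrivial primitive extension, contradicting maximality. Hence $X$ itself is SZ-IP, and Step 1 finishes the proof.

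\textbf{Main obstacle.} The hardest step is the compact-extension case (i). Here one has to use relative almost periodicity of $1_{\tilde A}$ over the base, meaning finitely many fibre functions $\varepsilon$-approximate all IP-translates. One then runs a coloring argument: an IP version of van der Waerden, namely the Hales--Jewett theorem in its IP form (Carlson--Simpson / Furstenberg--Katznelson), applied to the finite coloring that records which approximant is close to each translate. This finds $F$ with all $t$ translates close to a common function on a set of positive measure. The inductive hypothesis on the base then gives positivity.

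I expect this to demand the most care, because one must keep uniformity over the IP-system when passing to sub-IP-systems. I would first try to organize it through the IP-Szemerédi/Hales--Jewett combinatorics rather than through ordinary van der Waerden.
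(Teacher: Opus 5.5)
The paper does not prove this statement. It quotes it from Furstenberg and Katznelson, and your outline (correspondence principle, IP-limits via Hindman/Milliken--Taylor, a tower of factors, Hales--Jewett for the compact step) follows their route. As written, though, Step 3 has a genuine gap: the claim that the rigid/mixing dichotomy along $T^{n_F}$ leaves only your cases (i) and (ii).

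Ordinary weak mixing passes from $T$ to every nonzero power $T^j$, and this is why a compact versus weak-mixing dichotomy suffices in Furstenberg's single-transformation proof. IP-mixing does not pass to powers. Here is an example.
\begin{itemize}
\item Take $r_{i+1}=2M_ir_i$ with $\sum_i 1/M_i<\infty$.
\item Let $\sigma$ be the symmetrized law of $x=\sum_k \epsilon_k/(2r_k)$, with $\epsilon_k$ i.i.d.\ fair bits.
\item Then $e^{2\pi i r_i x}=(-1)^{\epsilon_i}+O(1/M_i)$ and $e^{2\pi i\, 2r_i x}=1+O(1/M_i)$.
\end{itemize}
So the Gaussian system with spectral measure $\sigma$ is weakly mixing, $\mathrm{IP}\text{-}\lim_F T^{n_F}$ is the projection onto constants, and $\mathrm{IP}\text{-}\lim_F T^{2n_F}=I$. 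Over the trivial factor, this extension is IP-mixing for $T^{n_F}$ and has no nontrivial part that is IP-compact for $T^{n_F}$. Yet for bounded $f$,
\[
\int f\cdot T^{n_F}f\cdot T^{2n_F}f\,dm\;\longrightarrow\;\int f^2\,dm\cdot\int f\,dm\neq\Bigl(\int f\,dm\Bigr)^3 .
\]
Read as mixing along $T^{n_F}$, claim (ii) that the mixing component contributes nothing to the multiple average is therefore false. If you instead demand mixing along every $T^{jn_F}$, $1\le j<t$, then this system falls under neither (i) nor (ii), and your dichotomy does not produce the IP-compact extension you need. Recurrence still holds in the example; it is your proof scheme that does not cover it.

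The missing idea is Furstenberg--Katznelson's notion of a primitive extension relative to a group of IP-systems, here the group generated by the $T^{jn_F}$. Such an extension is relatively compact for the IP-systems in some subgroup and relatively mixing for all the others. The structure theorem produces extensions of this mixed type. The SZ-IP property must be lifted through them in a single argument: the IP van der Corput lemma handles the mixing directions and the Hales--Jewett colouring handles the compact ones. That mixed case, not the purely compact one, is the heart of their proof. It is also why their induction runs over a whole group of commuting IP-systems rather than the single progression $T^{jn_F}$.

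Your purely compact step, by contrast, does close. The base configuration $\{\sum_i w_i n_{\alpha_i}\colon w\in\{0,\dots,t-1\}^N\}$ that Hales--Jewett requires follows by applying the base property iteratively along $\alpha_1<\alpha_2<\cdots$.
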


That is, $A$ contains arbitrarily long arithmetic progressions whose step size is given by a member of the IP-set.  

We show here that for Cantor sets of integers, that the statement above can be upgraded to a density statement. 
Let $\mathcal{C}_{b,D}$ denote a \emph{classical integer Cantor set} defined by a base $b\geq 3$, 
a choice of residues $\mathbb R_b$ forming a complete collection of residues for  $\mathbb{Z}_b$, and a collection of digits 
$ D \subset \mathbb{R}_b$ that satisfies $ 2\leq \lvert D \rvert < b$,
That is, 
\begin{equation}
\mathcal{C}_{b, D}  \coloneqq 
\Bigl\{ 
 \sum_{j=0}^k d_j b ^{j} 
 \colon  k\in \mathbb{N} ,\ d_j \in  D
\Bigr\}
\end{equation}
The most classical version would be the middle third Cantor set $\mathcal{C}_{3,\{ 0, 2 \}}$, that is, base $b=3$, the choice of residues is $\mathbb{R}_3 = \{0,1,2\}$, 
and allowed digits $D= \{0,2\}$.  
This is also an IP set. Indeed, for any base $b\geq 2$, and two allowed digits, with one being zero, forms a 
\emph{rational spectra} IP set,  as defined by Kra and Shalom \cite{kra2025ergodicaverageslargeintersection}. For such sequences, they  have shown that the multiple recurrence IP averages above have the same characteristic factors as the full linear averages.
We give a partial extension of this result to general Cantor sets, with $0$ as an allowed digit.

\begin{theorem}  \label{t:CSZ-Integers}
    Let $\mathcal K_{b,D}$ be a Cantor set with base $b$, 
    digits $D\subset [b]$, with $0\in D$, and $\lvert D\rvert \geq 2$.  Then, for  
    for all integers $\ell $, and all subsets $A\subset \mathbb N$ of positive lower density, we have 
    \begin{equation}
        \liminf_{N\to\infty} \Expectation_{x\in [N]}  
        \Expectation_{ n\in [N]} 
        \prod_{j=0}^{\ell-1}  \mathbf 1_{A} (x- j k_n) >0. 
    \end{equation}
    Above, $\{0=k_1<k_2 < \cdots\}$ is the increasing enumeration of $\mathcal K$. 
    \end{theorem}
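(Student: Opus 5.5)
The plan is to pass to ergodic theory, obtain a \emph{finitary, uniform} version of the Furstenberg--Katznelson IP Szemer\'edi theorem, and then show by double counting that a positive proportion of $n\in[N]$ lie inside finite IP-structures to which that uniform statement applies.

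First, by the Furstenberg correspondence principle (a set of positive lower density has positive upper Banach density), there are a measure preserving system $(X,m,T)$ and a set $A'$ with $m(A')>0$ such that the quantity in Theorem~\ref{t:CSZ-Integers} is bounded below by
\begin{equation}
\liminf_{N}\Expectation_{n\in[N]} m\bigl(A'\cap T^{-k_n}A'\cap\cdots\cap T^{-(\ell-1)k_n}A'\bigr).
\end{equation}
Set $\phi(k)=m(A'\cap T^{-k}A'\cap\cdots\cap T^{-(\ell-1)k}A')$. Since $\phi\ge 0$, and any $[N]$ contains a full block $[\lvert D\rvert^{M}]$ with $\lvert D\rvert^{M}\ge N/\lvert D\rvert$, it suffices to bound $\Expectation_{n\in[\lvert D\rvert^M]}\phi(k_n)$ from below uniformly in $M$. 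For $n<\lvert D\rvert^M$, $k_n=\sum_{j<M} d_j b^j$ where $(d_j)$ runs over all of $D^M$. So we must show that a random word $w\in D^M$ has $\phi(k_w)\ge c$ with probability at least $\eta>0$, where $c,\eta$ do not depend on $M$.

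Second, I would obtain a uniform version of the Furstenberg--Katznelson IP Szemer\'edi theorem, in affine form. The claim is that there are $L$ and $c>0$, depending only on $m(A')$, $\ell$ and the system, such that for every $a\in\mathcal K$ and every choice of $L$ positions $j_1<\dots<j_L$ at which the digits of $a$ are $0$, some nonempty $S\subset\{1,\dots,L\}$ satisfies
\begin{equation}
\phi\Bigl(a+d\sum_{i\in S} b^{j_i}\Bigr)\ge c,
\end{equation}
where $d\in D\setminus\{0\}$ is fixed. Since $0\in D$, every such $a+d\sum_{i\in S}b^{j_i}$ is again in $\mathcal K$. I would prove this by contradiction and compactness: if it failed, one could build, via a Hindman/Milliken--Taylor type diagonal argument, an infinite IP-system $\{a_\alpha\}$ consisting of affine shifts of IP-sets along which $\phi\to 0$. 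This should contradict the IP Szemer\'edi theorem of Furstenberg--Katznelson applied to the commuting transformations $T^{j}$ along the IP-system. The step I expect to be the main obstacle is exactly here: the base point $a$ varies, so one needs IP recurrence along \emph{affine} IP-systems, uniformly in the shift. The first thing I would try is to absorb $a$ by replacing $A'$ with a finite intersection of the sets $T^{-ja}A'$ and using the IP-polynomial/commuting version of Furstenberg--Katznelson; failing that, I would try to prove uniformity directly through the density Hales--Jewett theorem applied on the coordinates $j_1,\dots,j_L$.

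Third, the double counting. A typical $w\in D^M$ has about $M/\lvert D\rvert$ zero digits. Cut these zero positions into disjoint groups of $L$. For each group, and each nonempty $S$ in it, switching the positions in $S$ from $0$ to $d$ gives a new word. By the second step, every group produces at least one word $w'$ with $\phi(k_{w'})\ge c$. The map $(w,\text{group},S)\mapsto w'$ has multiplicity bounded by a constant depending only on $L$ and $\lvert D\rvert$: one recovers $w$ from $w'$ by knowing which at most $L$ of the $d$-digits to reset, and the grouping of positions is fixed canonically. Hence the proportion of words $w'$ with $\phi(k_{w'})\ge c$ is at least some $\eta(L,\lvert D\rvert)>0$. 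This gives $\Expectation_{n\in[\lvert D\rvert^M]}\phi(k_n)\ge c\eta$ for all large $M$, and the theorem follows by the first step.
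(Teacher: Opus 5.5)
\textbf{Verdict: the proposal has a genuine gap.} The uniform affine claim in your Step 2 is false, so Step 3 has nothing to count. The obstacle you flag there is not technical.

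Take $X=\mathbb Z/b\mathbb Z$ with $Tx=x+1$ and normalized counting measure, $A'=\{0\}$ and $\ell=2$. Then $\phi(k)=1/b$ if $b\mid k$ and $\phi(k)=0$ otherwise. The point $a=d$ lies in $\mathcal K$, and its zero digits are exactly the positions $j\geq 1$. For every choice $j_1<\cdots<j_L$ of such positions and every $S$, write $r_S=d\sum_{i\in S}b^{j_i}$; then $a+r_S\equiv d\not\equiv 0 \pmod b$. So $\phi$ vanishes on the whole configuration, whatever $L$ and $c$ are. The same happens for rotation by $\alpha=\sum_m b^{-m!}$, with $A'$ a short interval, $a\in\mathcal K$ with $a\alpha$ far from $\mathbb Z$, and the $j_i$ among large factorials. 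So the failure is not only about low digits or rational spectrum.

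Your suggested repairs do not get around this:
\begin{itemize}
\item The pattern $x,T^{a+r_S}x,T^{2(a+r_S)}x,\dots$ is recurrence for the \emph{different} sets $T^{-ja}A'$. Replacing $A'$ by $\bigcap_j T^{-ja}A'$ only helps if that set has measure bounded below, i.e.\ if $\phi(a)$ is already large.
\item The finitary IP theorem does follow from density Hales--Jewett, with $L,c$ depending only on $m(A')$ and $\ell$. But it covers only the pattern without background, $a=0$, and the words it certifies form a vanishing proportion of $D^M$.
\end{itemize}
In the example, the words with $\phi(k_w)>0$ are exactly those with last digit $0$. Switching zero digits of a word whose last digit is nonzero never produces one, so the double counting cannot start.

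What is missing is global information on how $k_n$ is distributed in the Kronecker factor, namely $k_n \bmod q$ and $\alpha k_n \bmod 1$. No local IP perturbation of an arbitrary background can detect this, and it is exactly the input the paper's proof rests on. Instead of an IP theorem plus counting, the paper runs Furstenberg's structure theory along $\mathcal K$:
\begin{itemize}
\item For weak mixing systems and extensions, it combines van der Corput with the fact that $\{n: k_{n+h}-k_n=\Delta\}$ is a union of progressions of step $\lvert D\rvert^i$ (Lemma \ref{l:Diff}). On these, $k_{ar+c}=b^ik_a+k_c$ lets the induction pass to $T^{b^i}$. It then applies the Cantor ergodic theorem, which sees only rational spectrum, and its uniform version (Theorems \ref{CantorErgodic} and \ref{t:Progressions}).
\item Compact systems use $\pi_q(0)>0$ from Lemma \ref{l:Allq}.
\item Compact extensions use the Cantor van der Waerden Theorem \ref{thmCvdW}.
\end{itemize}

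Separately, the lower bound you assert in Step 1 cannot hold for the displayed quantity as written; this mirrors the paper's one-line appeal to the correspondence principle. Both $x$ and $n$ range over $[N]$, while $k_n>N$ for all but $O(N^{\log\lvert D\rvert/\log b})$ of the $n\leq N$. Hence for $\ell\geq 2$ and $A\subset\mathbb N$ the double average tends to $0$. The correspondence principle controls each step $k_n$ separately, so the real target is the ergodic statement, Theorem \ref{t:CSZ}.
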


It is seems very plausible that many of the results of Kra and Shalom \cite{kra2025ergodicaverageslargeintersection}
 can be extended to the Cantor sets.

Our proof will follow the outline of the Furstenberg proof \cite{MR0498471} of Szemer\'{e}di's Theorem.  

\begin{theorem}  \label{t:CSZ}
    Let $\mathcal K_{b,D}$ be a Cantor set with base $b$, 
    digits $D\subset [b]$, with $0\in D$, and $\lvert D\rvert \geq 2$.  Then, for  
    for all integers $\ell $, and all  measure preserving systems $(X,\mathcal A, m, T)$ 
    and  non-negative  bounded $f \neq 0$, we have 
    \begin{equation}
        \liminf_{N\to\infty} \Expectation_X \Expectation_{N} 
        \prod_{j=0}^{\ell-1}  f(T^{j k_n} )>0. 
    \end{equation}
    Above, $\{0=k_1<k_2 < \cdots\}$ is the increasing enumeration of $\mathcal K$. 
    \end{theorem}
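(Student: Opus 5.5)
We follow Furstenberg's structure-theorem route, but keep the averaging over the index $n\in[N]$ rather than over the step $k_n$. The basic input is that the enumeration $n\mapsto k_n$ is itself built from digit expansions. Write $d=\lvert D\rvert$ and order $D=\{0=e_0<e_1<\cdots<e_{d-1}\}$. If $n-1=\sum_j a_j d^j$ in base $d$, then $k_n=\sum_j e_{a_j}b^j$. Hence, for $N=d^M$, averaging over $n\in[N]$ is the same as averaging over independent uniform digits in $D^M$. This product structure is the replacement for IP-recurrence.

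\textbf{Step 1: reduction to the lacunary subsequence $N=d^M$.} For general $N$, write $N$ in base $d$; the initial segment $[N]$ is then a disjoint union of blocks. Each block is a translate $k_{m}+\mathcal K^{(r)}$, where $\mathcal K^{(r)}$ is the set of Cantor numbers with $r$ digits, and the blocks with $r$ close to $\log_d N$ carry a positive proportion of $[N]$. On a block, $\prod_j f(T^{jk_n}x)$ becomes a shifted multiple average with the fixed offsets $jk_m$. I would handle these offsets as Furstenberg does, by considering the set of $k_m$ for which the full averages are large.

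\textbf{Step 2: the main claim, for $N=d^M$.} We show by induction up the Furstenberg--Zimmer tower that the averages $\liminf_M \int\Expectation_{n\in[d^M]}\prod_j f\circ T^{jk_n}$ are positive. There are three ingredients.
\begin{itemize}
\item \emph{Weak mixing extensions.} Use van der Corput in the product form. Since $n\leftrightarrow(a_0,\dots,a_{M-1})$, differencing over one digit coordinate at a time gives steps of the form $(e_a-e_{a'})b^j$. This reduces the multiple average to showing $\Expectation_{j<M}\lvert\langle T^{c b^j}g,h\rangle\rvert^2\to0$ for relatively weak-mixing $g$. The lacunary sequence $b^j$ is not good for mean convergence in general. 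We use instead that the averages over the digit cube are iterated averages $\Expectation_{a_0}\cdots\Expectation_{a_{M-1}}$, and apply the relative Hilbert-space van der Corput along each coordinate, as in the Furstenberg--Katznelson IP proof.
\item \emph{Compact extensions.} Use the IP-Hales--Jewett/van der Waerden-type argument. Here the cube $D^M$ plays the role of the IP-system, and one colors digit positions. Density in $n$ comes from the fact that a positive fraction of $D^M$ lies in any combinatorial subspace found by Hales--Jewett applied to colorings of $D^M$.
\item \emph{Limits of chains.} These are routine.
\end{itemize}

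\textbf{Step 3: the base case.} Here $X$ is trivial or a Kronecker factor, handled through $0\in D$: the $n$ with $k_n\equiv0$ modulo the relevant structure have positive density in digit space.

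\textbf{Main obstacle.} The crux is the weak-mixing step. The key point is to control the averages over $D^M$ by the IP-limits $\text{IP-}\lim\langle T^{k}g,h\rangle$, uniformly in the cube rather than along a sub-IP-system. I would try a density Hales--Jewett argument (Furstenberg--Katznelson) on $D^M$ to get positive density of good $n$. That would show positivity on a positive-density set of digit vectors, which yields the $\liminf>0$.
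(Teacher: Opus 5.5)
\textbf{Weak-mixing step.} This is where the proposal has a genuine gap. Differencing one digit coordinate at a time reduces you, as you say, to $\Expectation_{j<M}\lvert\langle T^{cb^j}g,h\rangle\rvert^2\to0$, and that statement is false.

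\emph{Why the reduced statement fails.} The sequence $(b^j)$ is a rigidity sequence for some weakly mixing systems. One example is a Gaussian system whose spectral measure is the law of $\theta=\sum_{i\geq1}x_ib^{-i}$, where the digits $x_i\in\{0,b-1\}$ come from a Markov chain that switches with probability $p_i\to0$, $\sum_ip_i=\infty$. In that system $T^{cb^j}g\to g$ for every $g$, so with $h=g$ of mean zero your average tends to $\lVert g\rVert^4\neq0$.

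\emph{Why the fallback does not help.} A single coordinate ranges over the fixed $d$-element set $D$, so van der Corput ``along a coordinate'' has no parameter going to infinity. The Furstenberg--Katznelson IP argument works with IP-limits along sub-IP-systems supplied by Hindman/Milliken--Taylor. That gives recurrence for some step but no lower bound on Ces\`aro averages in $n$, which is the concession in your ``main obstacle'' paragraph. Density Hales--Jewett is likewise a recurrence statement; it does not show the weak-mixing component contributes $o(1)$.

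\emph{Where the cancellation actually comes from.} It is multi-scale. For weakly mixing $T$, $\Expectation_{n\in[d^M]}T^{k_n}=\prod_{j<M}\Expectation_{e\in D}T^{eb^j}$ tends to the projection onto constants. This is because $\prod_{j<M}\phi(b^j\theta)\to0$ off a countable set of $\theta$, where $\phi(\theta)=\Expectation_{e\in D}e^{2\pi ie\theta}$, even though individual factors may tend to the identity.

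\emph{The missing idea.} Difference over a whole block of low-order digits, fixed before the high-order block grows.
Write $n\leftrightarrow(a',a'')\in D^{M_1}\times D^{M_2}$, so that $k_n=k_{a'}+b^{M_1}k_{a''}$, and let $u_n=\prod_{s=1}^tT^{sk_n}f_s$. Then $\lVert\Expectation_nu_n\rVert^2\le\Expectation_{a''}\lVert\Expectation_{a'}u_{(a',a'')}\rVert^2$.
Expanding the square and composing with $T^{-b^{M_1}k_{a''}}$ turns each of the finitely many terms indexed by $(a'_1,a'_2)$ into a $(t-1)$-function Cantor average for the weakly mixing map $T^{b^{M_1}}$. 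The functions in that average are $g_s=T^{sk_{a'_1}}f_s\cdot\overline{T^{sk_{a'_2}}f_s}$.
Letting $M_2\to\infty$, the induction hypothesis (over all weakly mixing systems) leaves $\Expectation_{a'_1,a'_2}\prod_s\int g_s=\lVert\Expectation_{a'\in D^{M_1}}S^{k_{a'}}F\rVert^2$, with $S=T\times T^2\times\cdots\times T^t$ and $F=f_1\otimes\cdots\otimes f_t$. This is small for large $M_1$ by the one-function case for $S$, since $\int F=0$.

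This is the mechanism behind the paper's proof. The paper runs van der Corput in the index $n$ with shifts $h\le H$. It uses Lemma~\ref{l:Diff}(iii) to split $\{n:k_{n+h}-k_n=\Delta\}$ into progressions of step $d^i$, on which $k_{d^ia+c}=b^ik_a+k_c$, so the induction runs on $T^{b^i}$. It then removes the leftover term with the uniform ergodic theorem \eqref{e:Uniform} for $S$.

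\textbf{Compact extensions and Step 1.} Your density claim is false as written. An $m$-dimensional combinatorial subspace of $D^M$ has $d^m$ points, i.e.\ density $d^{m-M}$, and Hales--Jewett forces $M\gg m$, so no positive proportion of $[d^M]$ lies in it. Moreover, a combinatorial line in $D^M$ yields Cantor elements $\{c+er:e\in D\}$, not the configuration $\{0,k_n,\dots,tk_n\}$, so it is unclear what should be colored.

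The paper's compact-extension argument is not a template you can borrow here. It colors $w\in[W]$ according to $T^{wn}f$, so its monochromatic progressions have step $kn$ with $k\in\mathcal K$ rather than $k_n$. It also invokes classical Szemer\'edi on $\mathcal D$ instead of the CSZ hypothesis.

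Finally, Step 1 is easier than you make it. The integrand is non-negative, so for $d^M\le N<d^{M+1}$ the average over $[N]$ is at least $d^{-1}$ times the average over $[d^M]$. No offsets $jk_m$ need to be handled.
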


Further questions that this result presents to us include: 
\begin{itemize}
    \item The question of quantitative bounds for these sorts of configurations is certainly challenging, and could be of some interest. 
  In the special case of $\ell=2$,  examples of power savings bounds are contained in \cite{firstpaper}. 
  
    \item  Again, for $\ell=2$, there is a polynomial version of this result \cite{firstpaper}. The result above should surely hold for polynomial configurations as well. 
    
    \item  The higher dimensional analog could have different formulations as well.  
    
\end{itemize}
 
\section{Preliminaries} 

\subsection{Notation} 
We use standard Vinogradov notation, thus $A\ll B$ means that $\lvert A\rvert \leq C \lvert B\rvert$, 
for an absolute constant $C$. By $A=B+O(\delta)$ we mean that $A-B \ll \delta$.  

For integers $N$, we set $[N] = \{1,2, \ldots, N\}$. 
For a finite set $N$ and $f \colon N\to \mathbb C$, we write 
\begin{equation}
    \Expectation_N f = \frac{1}{\lvert N\rvert} 
    \sum_{n\in N} f(n). 
\end{equation}
And we will frequently write $\Expectation_{[N]}=\Expectation_N$.  

\subsection{Measure Preserving Systems} 
We recall standard notions around measure preserving systems.  
In particular, $(X,\mathcal A,\mu,T)$ 
is a \emph{measure preserving system} if $(X,\mathcal A,\mu)$ is a probability space, and $T \colon X\to X$ preserves $m$ measure, in that $\mu(T^{-1} A)=\mu(A)$ for all measurable $A\in \mathcal A$.  
By $\Expectation_X f $ we mean $\int_X f \;d\mu$. 

There are standard reductions that we impose throughout. 
\begin{enumerate}
    \item All transformations $T$ are invertible. 

    \item $(X , \mathcal A, m)$ is a Borel probability space. 

    \item $(X , \mathcal A, m, T)$ is an ergodic measure preserving system, that is, the sets $A$ that are invariant under $T$ have measure either $0$ or $1$.  
\end{enumerate}
These reductions follow as in \cite{WardBook}*{\S 7.2.1,2,3}.

In different settings, the underling space $(X,\mathcal A, \mu )$ remains the same, while we change the measure preserving operator from $T$, to some power $T^k$. 
In this case, we  just indicate the change in the operator.  
In other cases, we have  $T$ invariant sigma algebras, $\mathcal D \subset \mathcal E \subset \mathcal A$. 
In that case, we may refer to the system by its sigma algebra. 

The system is called \emph{weak mixing} if for all $A, B\in \mathcal A$, one has 
\begin{equation} \label{e:weaklyMixingDef}
    \lim_N \Expectation_N \lvert 
    \mu(A\cap T^{-n}B) - \mu(A)\mu(B) \rvert =0. 
\end{equation}
This condition for sets extends to functions. For $f\in L^2(X)$ with mean zero,  
\begin{equation} \label{wm-functions}
    \lim_N \Expectation_N  
    \lvert \langle f, T^n g \rangle \rvert =0, \qquad g \in L^2(X). 
\end{equation}
Weak mixing has several well known equivalences, of which we mention these. 

\begin{proposition}
For a system $(X,\mathcal A,m,T)$, these conditions are equivalent. 
\begin{enumerate}
    \item  $ (X,\mathcal A,m,T)$ is weak mixing. 

    \item $X\times   X$ is ergodic.  
    \item For any other ergodic system $  Y$, 
    $  X\times   Y$ is ergodic. 
\end{enumerate}

\end{proposition}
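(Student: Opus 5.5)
We prove the cycle (1)$\Rightarrow$(3)$\Rightarrow$(2)$\Rightarrow$(1), working throughout with the functional form \eqref{wm-functions} of weak mixing and with the von Neumann mean ergodic theorem. Recall that for an ergodic system the mean ergodic theorem gives $\Expectation_N \langle T^n f, g\rangle \to \Expectation_X f \cdot \overline{\Expectation_X g}$. For a product system, ergodicity is equivalent to this convergence for all $f,g$ in a dense subset of $L^2$. We use the products $f_1\otimes f_2$, which span a dense subspace of $L^2(X\times Y)$.

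For (1)$\Rightarrow$(3), let $Y$ be ergodic and $F=f_1\otimes f_2$, $G=g_1\otimes g_2$. We must show
\begin{equation}
\Expectation_N \langle T^n f_1, g_1\rangle \langle S^n f_2, g_2\rangle \to \Expectation f_1\,\overline{\Expectation g_1}\;\Expectation f_2\,\overline{\Expectation g_2}.
\end{equation}
Write $f_1 = \Expectation f_1 + f_1^0$ with $f_1^0$ of mean zero. The constant part contributes $\Expectation f_1\,\overline{\Expectation g_1}\cdot \Expectation_N\langle S^n f_2,g_2\rangle$, which converges to the desired limit by the ergodicity of $Y$. The remaining part is bounded by
\begin{equation}
\|f_2\|\,\|g_2\|\,\Expectation_N\lvert\langle T^n f_1^0,g_1\rangle\rvert,
\end{equation}
which tends to zero by \eqref{wm-functions}. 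Linearity and density then give ergodicity of $X\times Y$.

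(3)$\Rightarrow$(2) needs $X$ itself to be ergodic, which is one of our standing reductions; alternatively, a non-ergodic $X$ makes $X\times X$ non-ergodic via $A\times X$ for an invariant set $A$. So we may take $Y=X$.

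The main step is (2)$\Rightarrow$(1). Given $f$ of mean zero and any $g$, apply ergodicity of $X\times X$ to $F=f\otimes\bar f$ and $G=g\otimes \bar g$. Since $\Expectation f=0$, this gives
\begin{equation}
\Expectation_N \lvert\langle T^n f, g\rangle\rvert^2 \to 0 .
\end{equation}
Cauchy--Schwarz in $n$ yields $\Expectation_N\lvert\langle T^n f,g\rangle\rvert\le(\Expectation_N\lvert\langle T^nf,g\rangle\rvert^2)^{1/2}\to0$, which is \eqref{wm-functions}. Taking $f=\mathbf 1_A-\mu(A)$ and $g=\mathbf 1_B$ recovers \eqref{e:weaklyMixingDef}.

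The only subtlety to watch is the equivalence of the set and function forms of weak mixing. The step above gives the function form, and its specialization to $f=\mathbf 1_A-\mu(A)$, $g=\mathbf 1_B$ gives the set form. Conversely, the set form implies the function form: approximate $f,g$ by simple functions, use linearity, and use the uniform bound $\lvert\langle T^nf,g\rangle\rvert\le\|f\|\|g\|$ to control the approximation errors uniformly in $n$.
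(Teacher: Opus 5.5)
The paper does not prove this proposition. It quotes it as a well-known equivalence, with ergodicity of $X$ among its standing reductions, so there is no argument of the authors to compare against.

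Your cycle (1)$\Rightarrow$(3)$\Rightarrow$(2)$\Rightarrow$(1) is the standard proof, and it is correct. In (2)$\Rightarrow$(1), you test ergodicity of $X\times X$ on $f\otimes\bar f$ and $g\otimes\bar g$ with $f$ already centered. This is a slightly cleaner variant of the usual set-based argument: it gives $\Expectation_N\lvert\langle T^nf,g\rangle\rvert^2\to\lvert\Expectation_X f\rvert^2\lvert\Expectation_X g\rvert^2=0$ at once. You avoid first extracting ergodicity of $T$ from that of $T\times T$ and then expanding a square.

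One remark should be fixed. In (3)$\Rightarrow$(2), your ``alternative'' justification targets the wrong implication. That a non-ergodic $X$ makes $X\times X$ non-ergodic shows that (2) forces $X$ to be ergodic, not that (3) does. Without the standing reduction, the correct one-line argument is to apply (3) with $Y$ the one-point system, so that $X\times Y\cong X$ is ergodic. Equivalently, an invariant $A$ with $0<m(A)<1$ produces the invariant set $A\times Y$ in $X\times Y$, contradicting (3). Once $X$ is known to be ergodic, $Y=X$ is admissible.

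Finally, your argument uses $\langle T^nf,g\rangle$ while \eqref{wm-functions} is written with $\langle f,T^ng\rangle$. This mismatch, in both (1)$\Rightarrow$(3) and (2)$\Rightarrow$(1), is harmless. When one of the two functions has mean zero, subtracting the mean of the other leaves the pairing unchanged, and $\lvert\langle T^nf,g\rangle\rvert=\lvert\langle g,T^nf\rangle\rvert$.
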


The concept of weakly mixing has several extensions. We will use this one.

\begin{proposition} \label{productweakly} Let $ (X,\mathcal A, m, T) $ be weakly mixing. Then, for all integers $t$, 
the system  $  T  \times T ^2 \times  \cdots \times  T ^{t}$ is weakly mixing.  
\end{proposition}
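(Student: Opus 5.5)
The plan is to show that the product system $S = T\times T^2\times\cdots\times T^t$, acting on $X^t$ with the product measure $m^{t}$, is weak mixing by verifying condition (2) of the preceding Proposition: $S\times S$ is ergodic. Up to a permutation of coordinates, $S\times S$ is the transformation $T\times T\times T^2\times T^2\times\cdots\times T^t\times T^t$ on $X^{2t}$. This is isomorphic to $R\times R^2\times\cdots\times R^t$ on $(X\times X)^t$, where $R=T\times T$. By the preceding Proposition, $R$ is again weak mixing: applying (3) with $Y=X\times X$ (which is ergodic by (2)) shows that $X\times X\times X\times X$ is ergodic, and this is $R\times R$. So the problem reduces to the following claim. \emph{If $R$ is weak mixing, then $R\times R^2\times\cdots\times R^t$ is ergodic.}

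The first step is to show that every power $R^j$, $j\ge 1$, of a weak mixing $R$ is weak mixing. I would use the functional form \eqref{wm-functions}. For mean zero $f$ and any $g$,
\begin{equation}
\Expectation_N \lvert\langle f,R^{jn}g\rangle\rvert \le j\,\Expectation_{jN}\lvert\langle f,R^{n}g\rangle\rvert \to 0,
\end{equation}
since the multiples of $j$ up to $jN$ form a subset of $[jN]$. Next I would use the equivalence (1)$\Leftrightarrow$(2) for $R^j$ together with (3) for $R$. The only caveat is that we need the general fact that the product of two weak mixing systems is weak mixing. This follows from (3) as well: if $U,V$ are weak mixing, then $(U\times V)\times(U\times V)\cong (U\times U)\times(V\times V)$. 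Here $U\times U$ is weak mixing, by the argument applied to $R$ above, and $V\times V$ is ergodic. So (3) applied to $U\times U$ gives ergodicity of $(U\times V)\times(U\times V)$, and hence $U\times V$ is weak mixing by (2).

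The conclusion then follows by induction on $t$. Each $R^j$ is weak mixing, and a product of weak mixing systems is weak mixing. Therefore $R\times R^2\times\cdots\times R^t$ is weak mixing, and in particular ergodic. As explained in the first paragraph, this gives ergodicity of $S\times S$, so $S$ is weak mixing by (2).

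The step I expect to need the most care is the claim that $R^j$ is weak mixing. Specifically, one must check that the functional characterization \eqref{wm-functions} is genuinely equivalent to the set definition \eqref{e:weaklyMixingDef}, so that the density argument above transfers. To settle it I would approximate functions in $L^2$ by simple functions, and note that the averages of $\lvert\langle f,R^n g\rangle\rvert$ are Lipschitz in $f$ and $g$ with respect to the $L^2$ norms. Everything else is bookkeeping with the equivalences of the preceding Proposition.
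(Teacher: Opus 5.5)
Your argument is correct, but it takes a different route from the paper. The paper works directly with the functional form \eqref{wm-functions}. It takes tensors $f=f_1\otimes\cdots\otimes f_t$ with some $f_{s_0}$ of mean zero and $g=g_1\otimes\cdots\otimes g_t$, and factors $\langle S^n f,g\rangle=\prod_s\langle T^{sn}f_s,g_s\rangle$. It bounds this by $\lvert\langle T^{s_0n}f_{s_0},g_{s_0}\rangle\rvert$, invokes weak mixing of $T^{s_0}$, and extends to all mean-zero functions by linearity and density of such tensors. You instead stay at the level of the ergodicity characterizations (2) and (3). You prove two facts: powers of a weak mixing map are weak mixing, and products of weak mixing systems are weak mixing. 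Your averaging estimate for powers is exactly the fact the paper uses without proof when it says $T^{s_0}$ is weak mixing. Your route avoids the tensor-density step and makes the power step explicit, but it treats the equivalences (1)--(3) as black boxes. The paper's computation has the advantage that it relativizes verbatim: it is reused in the lemma on weak mixing extensions to show that $\mathcal E^t$ is a weakly mixing extension of $\mathcal D^t$. A soft argument through (2) and (3) would there require relative-product analogues of those characterizations, which the paper does not set up.

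Two small points. First, one application of (3) with $Y=X\times X$ only gives ergodicity of $X^3$. You need a second application, with $Y=X^3$, to reach $X^4=R\times R$. The same applies when you reuse this for $U\times U$. Second, the detour through $S\times S$ and $R=T\times T$ is unnecessary. Your claim that $R\times R^2\times\cdots\times R^t$ is weak mixing whenever $R$ is, applied directly with $R=T$, already says that $S$ is weak mixing.
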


\begin{proof}
We check the condition \eqref{wm-functions} 
for functions $f$ on $L^\infty (X^t)$ which are tensor products in the coordinates. The equality then holds for linear combinations of such functions, which are dense. 

Thus $f= f_1 \otimes \cdots \otimes f_t$, with $f_{s_0}$ having integral zero for some $1\leq s_0 \leq t$.  
Likewise let $g= g_1 \otimes \cdots \otimes g_t$. 
Then, 
\begin{equation}
\langle (T\otimes \cdots \otimes T^t) ^{n} f,g \rangle_{X^t} 
= \prod_{s=1}^t \langle  T^{sn} f_s,g_s \rangle 
\ll \lvert \langle  T^{s_0n} f_{s_0},g_s \rangle \rvert. 
\end{equation}
And, $T^{s_0}$ is weakly mixing, so the claim follows. 
\end{proof}

\bigskip 

A function $f\in L^2(X)$ is called \emph{compact} if the set $\overline{ \{T^n f \colon n\in \mathbb{N}\} }$ is a compact subset of $L^2 (X)$.  The system is \emph{compact} if every $f\in L^2(X)$  is compact.  In this case, the operator $T$ only has rational spectra.

\smallskip 
Let $\mathcal D \subset \mathcal A$ be a $T$ invariant 
sigma algebra. The system 
$(X,\mathcal D, m , T)$ is said to be a \emph{factor} of 
the original system $(X,\mathcal A,m ,T)$.  
(Here, there is a slight abuse of notation, in that we use the same notation for the set function $m$ restricted to the collection $\mathcal D$.) 
Factors are then naturally identified with their sigma algebras. 
And, given $\mathcal D\subset \mathcal E \subset \mathcal A$, 
we refer to $ \mathcal E$ as an extension of $\mathcal D$.  

With $\mathcal D\subset \mathcal E \subset \mathcal A$, we define a conditional norm 
\begin{equation}
    \langle f,g \rangle _{\mathcal E\mid \mathcal D } 
    = \mathbb{E}_m  (f\overline g \mid \mathcal D) 
    \in L^2(\mathcal D) , \qquad 
    f,g\in L^2(\mathcal E). 
\end{equation}
This conditional inner product defines conditional  $L^2$ norms by 
$ \lVert f \rVert _{ \mathcal E \mid \mathcal  D }$. 
A conditional Cauchy-Schwartz and triangle inequality hold for this norm. 

\begin{definition}
    The Hilbert module $L^2(\mathcal E\mid \mathcal D)$ 
    are all those $f\in L^2(\mathcal E)$ for which the conditional norm  $\lVert f \rVert _{ \mathcal E \mid \mathcal  D } \in L^\infty (\mathcal D)$. 
\end{definition}

\begin{definition}
    A function $f\in L^2 (\mathcal E) $ is said to be \emph{conditionally weakly mixing} if  
    \begin{equation}
        \Expectation_{N} 
        \lvert \langle T^nf,f\rangle _{\mathcal E\mid \mathcal D} 
        \rvert =0, \quad \textup{in $L^2(\mathcal D)$} . 
    \end{equation}
The extension $\mathcal E$ of $\mathcal D$ is said to be \emph{weakly mixing} if every $f\in L^2 (\mathcal E) $ with $\mathbb E(f\mid \mathcal D)=0$  is conditionally weakly mixing. 
\end{definition}

\begin{definition}
    A function $f\in L^2 (\mathcal E) $ is said to be \emph{conditionally compact} if for every $\epsilon>0$ there 
    are a finite number of functions $g_1 ,\ldots,g_r$ so that 
    \begin{equation}
        \sup_{n\in \mathbb N} 
        \min_{1\leq q\leq r} 
        \bigl\lVert 
        \lVert T^n f - g_q \rVert_{\mathcal E\mid  \mathcal D} 
        \bigr\rVert_{L^\infty (\mathcal { D})} < \epsilon. 
    \end{equation}

    The extension $\mathcal E$ of $\mathcal D$ is said to be \emph{compact} if conditionally compact functions  are dense in  $L^2 (\mathcal E) $.  
\end{definition}

\subsection{Cantor Sets} 

We defined the Cantor sets of interest in the introduction. Here, we collect some properties needed to complete the main theorem.  

The first property is established in \cite{firstpaper}*{Prop. 2.2}. 
The detailed structure it provides is important to their analysis. 

\begin{lemma} \label{l:Diff}
    Let $\mathcal C=(k_n)_{n\geq 0}$ be an integer Cantor set in base $b$, with admissible digits $D=\{d_0<...<d_{|D|-1}\}\subset \{0,...,b-1\}$. Then, the following are true: 
    \begin{enumerate}
        \item[(i)] For any $n\in \mathbb{N}$, written in base-$|D|$ as $n=\sum_{c=0}^tn_c|D|^c$ with $n_c\in \{0,...,|D|-1\}$, then $k_n=\sum_{c=0}^td_{n_c}b^c$.  And $s_b(k_n)=\sum_{c=0}^td_{n_c}$.  Note: we do not allow leading zeroes in the base-$|D|$ representation of $n$ unless $0\not \in D$. \\
        \item[(ii)] For any $i\geq 0$, $n\geq 0$, and $0\leq j<|D|^i$, $k_{|D|^in+j}=b^ik_n+k_j$, 
        and $s_b(k_{|D|^in+j})=s_b(k_n)+s_b(k_j)$. 
        \\
        \item[(iii)] Fix $h\geq 1$ and $k, s\in \mathbb{N}_0$. Define 
        \begin{equation}
            \Delta_h^*(k,s):=\{n\geq 0:k_{n+h}-k_n=k,\  s_b(k_{n+h}) - s_b(k_n)=s\}. 
        \end{equation}
        Then, if $\Delta_h^*(k,s)$ is non-empty, it is the disjoint union of arithmetic progressions $\Delta_h^*(k,s)=\bigcup_{i\geq1}A_i$, with the step size of each $A_i$ being a power of $|D|$.
    \end{enumerate}
\end{lemma}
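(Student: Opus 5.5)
Statement (i) comes first; (ii) and (iii) follow from it. The plan for (i) is to show that the increasing enumeration of $\mathcal C$ is order-isomorphic to $\mathbb N_0$ under the digit-substitution map. Define $\phi(n)=\sum_{c} d_{n_c} b^c$, where $n=\sum_c n_c |D|^c$ in base $|D|$. Then $\phi$ maps onto $\mathcal C$ by definition, since every element of $\mathcal C$ is a finite digit string over $D$.

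The substantive point is that $\phi$ is strictly increasing. Take $n<n'$ and let $c_0$ be the highest index where their base-$|D|$ digits differ. Then $n_{c_0}<n'_{c_0}$, so $d_{n_{c_0}}<d_{n'_{c_0}}$ because the digits are listed increasingly. Since $d_{|D|-1}\le b-1$, the lower-order contribution satisfies
\begin{equation}
\Bigl\lvert\sum_{c<c_0} (d_{n'_c}-d_{n_c})b^c\Bigr\rvert\le \sum_{c<c_0}(b-1)b^c=b^{c_0}-1<b^{c_0}.
\end{equation}
This gives $\phi(n)<\phi(n')$. Leading zero digits of $n$ map to leading digits $d_0$, which vanish when $0\in D$, so representations are unique. (If $0\notin D$, distinct lengths give distinct elements, and the enumeration must account for strings of each length. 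This is the caveat in the note, and I would handle it by ordering strings first by length, then lexicographically.) Hence $k_n=\phi(n)$.

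The digit sum formula is immediate, because $\sum_c d_{n_c}b^c$ is already a legitimate base-$b$ expansion with digits in $[0,b-1]$, so no carries occur.

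For (ii): for $0\le j<|D|^i$, the base-$|D|$ digits of $|D|^i n+j$ are the digits of $j$, padded with zeros to length $i$, in positions $<i$, followed by the digits of $n$ shifted up by $i$. Applying $\phi$, the padded zeros contribute $d_0=0$. The low block gives $k_j$ and the high block gives $b^i k_n$. The digit sums add because the two blocks occupy disjoint positions with no carries.

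For (iii), which I expect to be the main obstacle, I would use (ii) as a self-similarity. Suppose $n\in\Delta_h^*(k,s)$. Choose $i$ with $|D|^i>h+(n \bmod |D|^i)$, or more carefully, choose $i$ large enough that $n+h<|D|^i$. Write $n=j$, so that $n$ and $n+h$ lie in $[0,|D|^i)$. Then for every $m\ge 0$, (ii) gives
\begin{equation}
k_{|D|^i m+n+h}-k_{|D|^i m+n}=k_{n+h}-k_n=k ,
\end{equation}
and the same holds for the digit-sum differences. So $n+|D|^i\mathbb N_0\subset\Delta_h^*(k,s)$.

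To obtain a disjoint decomposition, I would organize by the minimal such $i$. For $n\in\Delta$, let $i(n)$ be the least $i$ such that $(n \bmod |D|^i)+h<|D|^i$ and $n \bmod |D|^i\in\Delta$. The general form of (ii) shows the difference depends only on $n\bmod |D|^i$ whenever adding $h$ to the low block does not carry past position $i$. Such an $i$ exists for every $n$: take $i$ beyond the top digit of $n+h$. Set $r(n)=n\bmod |D|^{i(n)}$. Then $\Delta$ is the union of the progressions $r+|D|^{i}\mathbb N_0$ over the minimal pairs $(r,i)$.

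Disjointness is the delicate step. I would argue that two minimal residue classes are either nested or disjoint, since the moduli are all powers of $|D|$, and that nesting contradicts minimality. Then I would discard the non-maximal classes.
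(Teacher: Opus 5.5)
The paper does not prove this lemma; it imports it from \cite{firstpaper} (Prop.~2.2), so there is no in-paper argument to compare with. Your proposal is a correct, self-contained proof under the hypothesis $0\in D$, which is the only case the paper uses.

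Your argument has three steps, and each is sound. First, digit substitution $\phi$ is an order isomorphism $\mathbb{N}_0\to\mathcal C$, because it outputs genuine base-$b$ expansions; so comparison is lexicographic and digit sums involve no carries. Second, (ii) is block concatenation, with the padding contributing $d_0=0$. Third, for (iii), write $n=|D|^i m+j$ with $0\le j<|D|^i$ and $j+h<|D|^i$. Then (ii) gives $k_{n+h}-k_n=k_{j+h}-k_j$, and likewise for digit sums. So the pair $(k_{n+h}-k_n,\ s_b(k_{n+h})-s_b(k_n))$ depends only on $n\bmod |D|^i$.

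The disjointness step is easier than you make it.
\begin{itemize}
\item Let $i(n)$ be the least $i$ with $(n\bmod|D|^i)+h<|D|^i$. The extra clause ``$n\bmod|D|^i\in\Delta$'' in your definition is automatic for $n\in\Delta$.
\item Whether this no-carry condition holds at a level $i\le i(n)$ depends only on $n\bmod|D|^{i(n)}$. Hence every $n'\equiv n\pmod{|D|^{i(n)}}$ has $i(n')=i(n)$.
\item Consequently the classes $(n\bmod|D|^{i(n)})+|D|^{i(n)}\mathbb{N}_0$ are either equal or disjoint. Strict nesting never occurs, so your final ``discard the non-maximal classes'' step is vacuous.
\item These classes form one partition of $\mathbb{N}_0$, depending only on $h$ and $|D|$, not on $(k,s)$. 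The pair above is constant on each cell, and every $\Delta_h^*(k,s)$ is a union of cells. This is exactly the form in which the sets $\mathbb{N}_\Delta$ are used in the weak-mixing arguments, including their positive density.
\end{itemize}

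Your parenthetical about $0\notin D$ is not needed and does not establish (i). When $0\notin D$, (ii) already fails at $n=j=0$: it forces $k_0=0$, but $0\notin\mathcal C$. So the lemma only makes sense for $0\in D$, and your use of $d_0=0$ is a necessary hypothesis, not a gap. The length-then-lexicographic ordering you sketch gives a different indexing (bijective numeration), not formula (i).
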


The equidistribution properties of Cantor sets have been investigated in several papers, including \cites{ERDOS199899, SAAVEDRA-ARAYA_2026}.  
For us, we need the following properties, which were established in our first paper \cite{firstpaper}.

\begin{lemma} \label{l:Allq}
Let $\mathcal C = \mathcal C_{b,D} 
= \{k_1<k_2 < \cdots\}$ be a classical digit set with $0\in D$. Then, for all $q\in \mathbb{N}   $, 
the sequence $ k_n\mod q$ converges to a distribution on $\mathbb{Z} _q$. 
In particular, 
\begin{equation}
\pi_q (a) = \lim_{N\to \infty} \mathbb{E} _{N} 
\mathbf{1} _{k_n \equiv a \mod q} 
\end{equation}
converges for all $a\in \mathbb{Z} _q$, and $\pi_q (0)>0 $ for all $q$.

\end{lemma}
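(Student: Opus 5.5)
The plan is to reduce to two extreme cases using the self-similarity in Lemma~\ref{l:Diff}(ii): the case where $q$ is coprime to $b$, and the case where $q$ divides a power of $b$. We then glue the two cases together with the Chinese remainder theorem. Throughout I index the Cantor set as in Lemma~\ref{l:Diff}, so $k_0=0$; the shift to the enumeration starting at $k_1=0$ does not affect limits of averages. Write $q=q_1q_2$, where $q_2$ collects exactly the prime powers of $q$ whose primes divide $b$. Then $\gcd(q_1,b)=1$ and $q_2\mid b^s$ for some $s$.

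\emph{Step 1: $q$ coprime to $b$.} Let $r$ be the order of $b$ modulo $q$. By Lemma~\ref{l:Diff}(i), for $n$ uniform in $[0,|D|^{rt})$ the base-$|D|$ digits of $n$ are i.i.d.\ uniform. Hence $k_n \bmod q$ is distributed as a sum $X_1+\cdots+X_t$ of i.i.d.\ copies of
\begin{equation}
X=\sum_{c<r} d_c b^c \bmod q, \qquad d_c \text{ i.i.d.\ uniform on } D.
\end{equation}
Let $H\le \mathbb Z_q$ be the subgroup generated by the support of $X$.
\begin{itemize}
\item Since $0\in D$, $X=0$ with positive probability, so the walk is aperiodic on $H$.
\item The standard Fourier argument then shows that the law of $X_1+\cdots+X_t$ tends to the uniform measure on $H$: every nontrivial character of $H$ has $|\widehat{X}|<1$.
\item Choosing all but one digit equal to $0$ shows $d b^c\in H$ for every $d\in D$ and every $c$. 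Consequently every $k_m$ lies in $H$.
\end{itemize}

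For general $N$, expand $N$ in base $|D|$ and split $[0,N)$ into blocks of the form $\{|D|^i m+j : 0\le j<|D|^i\}$. By Lemma~\ref{l:Diff}(ii), on such a block $k_n=b^ik_m+k_j$. So the block's distribution mod $q$ is the distribution of $k_j$ over $j<|D|^i$, translated by $b^ik_m\in H$. Blocks with $i$ large are therefore nearly uniform on $H$, because translation by an element of $H$ preserves uniform measure on $H$. Blocks with $i$ small make up a proportion $O(|D|^{-i_0})$ of $[0,N)$. Hence $\pi_q$ exists and equals uniform measure on $H$, and in particular $\pi_q(0)=1/|H|>0$.

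\emph{Step 2: general $q$.} Write $n=|D|^s m+j$ with $0\le j<|D|^s$, so that $k_n=b^sk_m+k_j$. Since $q_2\mid b^s$, we get $k_n\equiv k_j \pmod{q_2}$. Modulo $q_1$ we have $k_n\equiv b^sk_m+k_j$, and $b^s$ is a unit mod $q_1$. For each fixed residue $j$, the $m$'s contributing to $[N]$ form an initial segment of length about $N/|D|^s$. By Step 1, $b^sk_m \bmod q_1$ is therefore asymptotically uniform on $b^sH$. Combining the two congruences via the Chinese remainder theorem gives
\begin{equation}
\pi_q(a)=|D|^{-s}\sum_{j<|D|^s}\mathbf 1_{k_j\equiv a \ (q_2)}\cdot \frac{\mathbf 1_{a-k_j\in b^sH \ (q_1)}}{|H|}.
\end{equation}
For $a=0$ the term $j=0$ contributes $|D|^{-s}/|H|>0$, since $k_0=0$ and $0\in b^sH$. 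This gives $\pi_q(0)>0$.

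The main obstacle is Step~1, specifically two points. First, one must check that the limiting measure is uniform on $H$ itself and not on a coset of a smaller subgroup; this is exactly where $0\in D$ is used, giving aperiodicity. Second, the tail-block argument must be uniform in the prefix $m$; this holds because all prefixes shift by elements of $H$. The rest is bookkeeping of the base-$|D|$ block decomposition.
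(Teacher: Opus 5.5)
Your proof is correct. The paper does not prove this lemma; it quotes it from the authors' earlier paper \cite{firstpaper}. The only trace here of the underlying analysis is the Remark after Theorem~\ref{CantorErgodic}, which describes the limiting exponential averages $\gamma_r$ at rational frequencies, i.e.\ the Fourier side of $\pi_q$.

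Your route is the physical-space counterpart of that description. It has three ingredients:
\begin{enumerate}
\item a random walk on $\mathbb{Z}_{q_1}$ driven by the i.i.d.\ base-$\lvert D\rvert$ digits;
\item the self-similar block decomposition of Lemma~\ref{l:Diff}(ii), which passes from $N=\lvert D\rvert^{i}$ to general $N$;
\item the Chinese remainder theorem, which handles the primes dividing $b$.
\end{enumerate}
This gives a self-contained proof and an explicit limit. In fact your $H$ is simply the subgroup of $\mathbb{Z}_{q_1}$ generated by $\gcd(D)$: it contains every $d\in D$ (take $c=0$), and every $db^c$ is a multiple of $\gcd(D)$. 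This agrees with the Remark's claim that only frequencies $r$ with $\gcd(D)\,r$ a $b$-adic rational survive.

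Your argument also shows exactly where $0\in D$ is used:
\begin{enumerate}
\item it makes the walk aperiodic;
\item it makes leading zeros harmless in Lemma~\ref{l:Diff}(i);
\item it gives $k_0=0$, which produces the term $\lvert D\rvert^{-s}/\lvert H\rvert$ in $\pi_q(0)$.
\end{enumerate}

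Two small points should be tightened.

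First, your i.i.d.\ representation only covers blocks of length $\lvert D\rvert^{rt}$. For a block of length $\lvert D\rvert^{i}$ with $r\nmid i$, split off the top $i-r\lfloor i/r\rfloor$ digits as an independent $H$-valued summand. Convolving with any probability measure on $H$ fixes the uniform measure on $H$ and does not increase the total-variation distance to it.

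Second, there are at most $\lvert D\rvert-1$ blocks of each length, so the blocks with $i<i_0$ have total size less than $\lvert D\rvert^{i_0}$. Their proportion of $[0,N)$ is therefore at most $\lvert D\rvert^{i_0}/N$, not $O(\lvert D\rvert^{-i_0})$. The conclusion is unaffected: let $N\to\infty$ first and then $i_0\to\infty$.
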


And, we have the equidistribution result, for $[0,1]$, which we will need of.

\begin{lemma} \label{l:irrational}
Let $\mathcal K = \mathcal K_{b,D} 
= \{k_1<k_2 < \cdots\}$ be a classical digit set with $0\in D$. 
Then, for all  irrational $\alpha$, the sequence $\{\alpha k_n\}$ 
is uniformly distributed modulo $1$. In particular, 
\begin{equation}
    \lim_{N\to \infty} \Expectation e^{2 \pi \alpha k_n} =0, \quad \alpha\not\in \mathbb{Q}.  
\end{equation}
\end{lemma}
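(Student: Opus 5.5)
The plan is to use Weyl's criterion. For an integer $h\neq 0$, $h\alpha$ is again irrational, so it is enough to show that $\Expectation_{n\in[N]} e(\beta k_n)\to 0$ for every irrational $\beta$, where $e(x)=e^{2\pi i x}$. The shift in indexing ($k_1=0$) changes each average by $O(1/N)$ and can be ignored.

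\emph{Step 1: complete blocks.} Since $0\in D$, Lemma \ref{l:Diff}(i) says that as $n$ runs over $0\le n<|D|^t$, the numbers $k_n$ run exactly once over all sums $\sum_{c<t} d^{(c)} b^c$ with $d^{(c)}\in D$ chosen independently; leading zeros give no repetitions. Hence the sum factors:
\begin{equation}
\Expectation_{0\le n<|D|^t} e(\beta k_n)=\prod_{c=0}^{t-1} \phi(\beta b^c), \qquad \phi(x)=\frac1{|D|}\sum_{d\in D} e(xd).
\end{equation}
Each factor satisfies $|\phi|\le 1$. Fix a nonzero digit $d_1\in D$. Comparing the two terms $d=0$ and $d=d_1$ gives
\begin{equation}
|\phi(x)|\le 1-\eta\,\|d_1 x\|^2
\end{equation}
for some $\eta=\eta(|D|)>0$. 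So the product tends to $0$ provided $\|d_1\beta b^c\|\ge \delta$ for infinitely many $c$, with some fixed $\delta>0$.

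\emph{Step 2: the key arithmetic point, which I expect to be the main obstacle.} Set $y=d_1\beta$, which is irrational. I would show that $\|b^c y\|\ge 1/(2b)$ for infinitely many $c$. Suppose instead that $\|b^c y\|<1/(2b)$ for all $c\ge c_0$. Whenever $\|z\|<1/(2b)$ and $\|bz\|<1/(2b)$, we have $\|bz\|=b\|z\|$. Iterating this gives $\|b^{c}y\|=b^{c-c_0}\|b^{c_0}y\|$, which stays below $1/(2b)$ only if $\|b^{c_0}y\|=0$. That would make $y$ rational, a contradiction. Hence the full-block averages $P_t:=\prod_{c<t}\phi(\beta b^c)$ tend to $0$, and since $|\phi|\le 1$ they decrease in modulus. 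So $\sup_{i\ge i_0}|P_i|\to 0$ as $i_0\to\infty$.

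\emph{Step 3: general $N$.} Write $N$ in base $|D|$ and split $[0,N)$ greedily into disjoint blocks of the form $\{|D|^i m+j: 0\le j<|D|^i\}$, with at most $|D|$ blocks for each $i$. By Lemma \ref{l:Diff}(ii), $k_{|D|^i m+j}=b^i k_m+k_j$. Therefore the sum over such a block equals $e(\beta b^i k_m)\,|D|^i P_i$, and its modulus is at most $|D|^i|P_i|$. Blocks with $i<i_0$ contain at most $|D|^{i_0+1}$ indices in total, so they contribute $O(|D|^{i_0}/N)$. The remaining blocks contribute at most $\sup_{i\ge i_0}|P_i|$ to the average. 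Letting $N\to\infty$ and then $i_0\to\infty$ gives $\Expectation_N e(\beta k_n)\to 0$, which completes the proof.
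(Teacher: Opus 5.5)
Your proof is correct and complete. The paper does not prove this lemma at all: it is quoted as established in the authors' earlier paper \cite{firstpaper}, with \cite{ERDOS199899, SAAVEDRA-ARAYA_2026} mentioned for related equidistribution results. So there is no in-text argument to compare with, and your proposal is a self-contained replacement for that citation.

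Your argument has four ingredients:
\begin{enumerate}
\item Weyl's criterion.
\item The Riesz-product identity $\Expectation_{0\le n<|D|^t}e(\beta k_n)=\prod_{c<t}\phi(\beta b^c)$ on complete blocks.
\item The elementary fact that $\lVert b^c y\rVert\ge 1/(2b)$ for infinitely many $c$ when $y$ is irrational.
\item The greedy base-$|D|$ decomposition of $[0,N)$, using Lemma \ref{l:Diff}(ii) and the monotonicity of $|P_i|$.
\end{enumerate}
It needs nothing beyond Lemma \ref{l:Diff}(i),(ii). It also shows exactly where $0\in D$ enters: padding indices with leading zeros costs nothing. That is what makes complete blocks factor and what gives $k_{|D|^im+j}=b^ik_m+k_j$.

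The argument gives no rate of decay in terms of the Diophantine nature of $\alpha$. The lemma as stated is purely qualitative, though, so nothing is lost.

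Two cosmetic remarks. In Step 2 the hypothesis $\lVert bz\rVert<1/(2b)$ is superfluous, since $\lVert z\rVert<1/(2b)$ alone already gives $\lVert bz\rVert=b\lVert z\rVert$. In Step 3 there are at most $|D|-1$ blocks at each level, so the short blocks contain fewer than $|D|^{i_0}$ indices. Neither affects the argument.
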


We have this ergodic Theorem. 

\begin{theorem} \label{CantorErgodic}
Let $(X, \mathcal A, m,T)$ be a measure preserving system, and $\mathcal K$ an integer Cantor set 
with base $b$ and digit set $D$, with $0\in D$.  
Then, for all $f\in L^2$, 
\begin{equation}
\lim_{N\to\infty} 
\Expectation _N  
T^{k_n} f = \sum_r \gamma _r P_r f 
\end{equation}
the convergence holding in $L^2(X)$. 
On the right above, the sum is over   rationals $r$, $P_r$ is the projection of $f$ onto the eigenfunctions of $T$ with eigenvalue $e^{2 \pi i r}$, and $\gamma _r$ is 
a number of complex modulus at most $1$.  
\end{theorem}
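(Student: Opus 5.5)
Since this is a mean ergodic theorem along $\{k_n\}$, I would prove it spectrally. By the spectral theorem for the unitary operator $U f = f\circ T$ (invertible by our standing reductions), there is a finite spectral measure $\sigma_f$ on the circle $\mathbb T=\mathbb R/\mathbb Z$ with
\begin{equation}
\Bigl\lVert \Expectation_N T^{k_n} f - \Expectation_M T^{k_n} f\Bigr\rVert_{2}^2 = \int_{\mathbb T} \Bigl\lvert \Expectation_N e^{2\pi i k_n\theta} - \Expectation_M e^{2\pi i k_n\theta}\Bigr\rvert^2 \, d\sigma_f(\theta).
\end{equation}
So it suffices to show that the scalar averages $a_N(\theta) = \Expectation_N e^{2\pi i k_n\theta}$ converge for every $\theta$. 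The limit $\gamma(\theta)$ then has modulus at most $1$. Dominated convergence, with the bound $\lvert a_N\rvert\le 1$ and $\sigma_f$ finite, gives convergence in $L^2$ to $\gamma(U)f$.

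The pointwise convergence splits into two cases.
\begin{enumerate}
\item For $\theta$ irrational, Lemma \ref{l:irrational} gives $a_N(\theta)\to 0$.
\item For $\theta = a/q$ rational, $e^{2\pi i k_n a/q}$ depends only on $k_n \bmod q$, so
\begin{equation}
a_N(a/q) = \sum_{c\in\mathbb Z_q} e^{2\pi i ac/q}\, \Expectation_N \mathbf 1_{k_n\equiv c \bmod q}.
\end{equation}
By Lemma \ref{l:Allq} this converges to $\gamma_{a/q} := \sum_c \pi_q(c) e^{2\pi i ac/q}$. This limit is independent of the representation of $a/q$, since the distribution of $k_n$ modulo a divisor is the pushforward of the distribution modulo $q$.
\end{enumerate}
Hence $\gamma = \sum_{r\in\mathbb Q/\mathbb Z} \gamma_r \mathbf 1_{\{r\}}$ everywhere on $\mathbb T$.

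It remains to identify $\gamma(U) f$. Since $\gamma$ vanishes off the countable set of rationals, $\gamma(U) = \sum_r \gamma_r E(\{r\})$, where $E$ is the projection-valued spectral measure. The projection $E(\{r\})$ is exactly $P_r$, the projection onto the eigenspace of $U$ with eigenvalue $e^{2\pi i r}$. The sum converges in $L^2$ because the $P_r$ have orthogonal ranges and $\lvert\gamma_r\rvert\le1$. This gives $\lim_N \Expectation_N T^{k_n} f = \sum_r \gamma_r P_r f$.

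The only delicate step is the case analysis for pointwise convergence, and both halves are supplied by the cited lemmas. One should check that Lemma \ref{l:Allq} covers every modulus $q$, so that $\gamma$ is well defined at each rational. One should also note that the averages in the theorem are over $n\in[N]$, matching the normalization in those lemmas. Once pointwise convergence of $a_N$ is in hand, the spectral argument is routine.
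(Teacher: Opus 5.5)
Your proof is correct. The spectral theorem reduces $L^2$ convergence to pointwise convergence of the scalar averages $\Expectation_N e^{2\pi i k_n\theta}$, which Lemmas \ref{l:irrational} and \ref{l:Allq} supply, and the limiting multiplier is supported on the rationals, so it acts as $\sum_r \gamma_r P_r$. The paper gives no separate proof of this theorem and imports it from \cite{firstpaper}, but your spectral reduction to the two equidistribution lemmas stated just before the theorem is evidently the intended, standard route.
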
 

\begin{remark}
    Above, the sum can be further restricted to rationals $r$ of the following form: 
    $sr$ is a $b$-adic rational, where   $s= \textup{gcd}(D)$. 
    The constants $\gamma_r$ only decrease in the `$b$-adic height' of the $sr$. These are detailed in our prior paper \cite{firstpaper}*{\S6}. 
\end{remark}

We will need the following refinements of the ergodic theorem above.  

\begin{theorem} \label{t:Progressions}
Under the assumptions of Theorem \ref{CantorErgodic}, 
and assuming that $(X, \mathcal A, m,T)$ is weakly mixing, 
we have these additional properties. 
First, a uniform ergodic theorem: 
\begin{equation}
    \label{e:Uniform} 
   \lim_{N-M\to\infty} 
\Expectation _{[M,N]}  
T^{k_n} f = \int_X f \;dm  
\end{equation}
the convergence in $L^2(X)$.  
Second, along progressions:  for all progressions $P$ with step size a power of $\lvert D\rvert$, we have 
    \begin{equation} \label{e:progressions}
\lim_{N\to\infty} 
\Expectation _{[N]\cap P}  
T^{k_n} f =  \Expectation_X f . 
\end{equation}
\end{theorem}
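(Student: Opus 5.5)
We argue spectrally and reduce both claims to estimates on exponential sums along $k_n$. Replacing $f$ by $f-\Expectation_X f$, we may assume $\Expectation_X f=0$; both right-hand sides are then $0$. Since $(X,\mathcal A,m,T)$ is weakly mixing, $T$ has no eigenvalues other than $1$, with only the constants as eigenfunctions. Hence the spectral measure $\sigma_f$ of a mean zero $f$ is continuous, and in particular gives zero mass to every rational point of $\mathbb T$. For any finite set $I\subset \mathbb N$ the spectral theorem gives
\begin{equation}
\Bigl\lVert \Expectation_{n\in I} T^{k_n} f \Bigr\rVert_{L^2}^2
= \int_{\mathbb T} \Bigl\lvert \Expectation_{n\in I} e^{2\pi i k_n\theta}\Bigr\rvert^2 \, d\sigma_f(\theta).
\end{equation}
The integrand is bounded by $1$, so by dominated convergence it suffices to show, for each irrational $\theta$, that the exponential sums tend to zero in the relevant sense.

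For the uniform statement \eqref{e:Uniform}, we need $\sup_M \bigl\lvert \Expectation_{[M,M+L]} e^{2\pi i k_n\theta}\bigr\rvert \to 0$ as $L\to\infty$, for each irrational $\theta$. Dominated convergence still applies, since we may take the supremum over $M$ inside the integral. The key input is the self-similarity in Lemma~\ref{l:Diff}(ii). Fix $i$, and consider an aligned block $\{|D|^i n + j : 0\le j<|D|^i\}$. On it,
\begin{equation}
\sum_{j<|D|^i} e^{2\pi i k_{|D|^i n+j}\theta} = e^{2\pi i b^i k_n\theta}\sum_{j<|D|^i} e^{2\pi i k_j\theta},
\end{equation}
whose modulus is $|D|^i\,\epsilon_i(\theta)$, where $\epsilon_i(\theta)=\bigl\lvert \Expectation_{j<|D|^i} e^{2\pi i k_j\theta}\bigr\rvert$. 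By Lemma~\ref{l:irrational}, $\epsilon_i(\theta)\to 0$ as $i\to\infty$, since we evaluate the full averages at $N=|D|^i$. Any interval $[M,M+L]$ is a disjoint union of aligned blocks of length $|D|^i$ together with at most $2|D|^i$ leftover integers. Therefore
\begin{equation}
\Bigl\lvert \Expectation_{[M,M+L]} e^{2\pi i k_n\theta}\Bigr\rvert \le \epsilon_i(\theta) + \frac{2|D|^i}{L},
\end{equation}
uniformly in $M$. Letting $L\to\infty$ and then $i\to\infty$ gives the claim.

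For \eqref{e:progressions}, write $P=\{a+|D|^i m\}$, and write $a=|D|^i n_0+j$ with $0\le j<|D|^i$. Then $P\cap[N]$ consists of the indices $|D|^i n+j$ with $n$ ranging over an interval $[n_0, n_0+L]$, where $L\approx N/|D|^i$. By Lemma~\ref{l:Diff}(ii),
\begin{equation}
T^{k_{|D|^i n+j}}f = T^{k_j}\,(T^{b^i})^{k_n} f .
\end{equation}
Now $T^{b^i}$ is again weakly mixing, as a nonzero power of a weakly mixing transformation. Applying the uniform statement \eqref{e:Uniform} to the system $(X,\mathcal A,m,T^{b^i})$ shows that $\Expectation_{n\in[n_0,n_0+L]}(T^{b^i})^{k_n}f\to \Expectation_X f=0$ in $L^2$. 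The operator $T^{k_j}$ is a fixed isometry, so the claim follows.

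The uniformity in the starting point is exactly what allows arbitrary offsets $a$ here. I expect the only genuinely delicate step to be that uniformity, which is handled by the block decomposition above. One should also check that the continuity of $\sigma_f$ really removes all rationals, including $\theta=0$; this holds because $\Expectation_X f=0$ and ergodicity make the projection onto constants vanish.
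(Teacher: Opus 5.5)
Your proof is correct. For the progression statement \eqref{e:progressions} it is exactly the paper's argument: Lemma \ref{l:Diff}(ii) rewrites $T^{k_{|D|^i n+j}}f$ as $T^{k_j}(T^{b^i})^{k_n}f$, and you then apply the ergodic theorem to the weakly mixing system $T^{b^i}$. One small correction to your closing remark: uniformity in the starting point is not really needed there. For a fixed progression $P$ the offset $n_0$ is fixed, so averages over $[n_0,n_0+L]$ and over $[1,L]$ differ by $O(n_0/L)$, and the non-uniform Theorem \ref{CantorErgodic} already suffices.

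The genuine difference is in the uniform statement \eqref{e:Uniform}. The paper does not prove it; it imports it from equation (4.6) of its earlier paper. You give a self-contained argument instead:
\begin{itemize}
\item Weak mixing makes $\sigma_f$ atomless. Dominated convergence, with the supremum over $M$ taken inside the integral, then reduces the problem to irrational $\theta$.
\item The identity $k_{|D|^in+j}=b^ik_n+k_j$ lets you tile any window by aligned blocks. On each block the exponential sum is a unimodular phase times the $|D|^i$-average.
\item Hence uniform-in-$M$ decay follows from decay along $N=|D|^i$ alone.
\end{itemize}
That last input needs less than the full strength of Lemma \ref{l:irrational}. It is the decay of the product $\prod_{c<i}\frac{1}{|D|}\sum_{d\in D}e^{2\pi i d b^c\theta}$ for irrational $\theta$, which follows by a short argument, and your tiling then re-derives Lemma \ref{l:irrational} for all $N$.

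Your route is elementary and makes this theorem independent of the earlier paper. The price is that dominated convergence gives only a qualitative limit, with no rate. A cited exponential-sum estimate of the kind developed in the earlier paper may carry quantitative information tied to how well $\theta$ is approximated by $b$-adic rationals. That extra information is not needed for its use in this paper (the choice of $H$ in \eqref{e:Huniform}).
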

 
The uniform ergodic theorem statement follows from \cite{firstpaper}*{(4.6)}. 
The ergodic theorem along progressions follows from Lemma \ref{l:Diff}(ii). 

\subsection{Outline of the Proof} 

The method of proof follows the lines of ergodic proofs of Szemer\'edi's Theorem. To summarize this approach, we say that a measure preserving system $(X,A,m,T)$ is \emph{Cantor-Szemer\'edi},  or \emph{CSZ}, 
if this holds: 
For all measurable $A\subset X$, with $\mu(A) >0$, all integers $t \geq 1$, and all Cantor sets $\mathcal K = \mathcal K_{b,D}$, with base $b$, 
and digit set $D\subset [b]$, and $0\in D$, there holds 
\begin{equation}
    \liminf_N \Expectation 
    \int_A  \prod _{s=1}^t T^{sk_n}\mathbf 1_A \;dm >0. 
\end{equation}
Here, $\mathcal K = \{k_1<k_2<\cdots\}$.  
We prove 

\begin{theorem} \label{t:ECSZ}
    Every ergodic separable and invertible  measure preserving system is Cantor-Szemer\'edi.  
\end{theorem}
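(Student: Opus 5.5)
We follow Furstenberg's structure theorem strategy. By the Furstenberg--Zimmer structure theorem, every ergodic separable system $(X,\mathcal A,m,T)$ has a transfinite tower of factors $\mathcal D_0\subset \mathcal D_1\subset\cdots\subset \mathcal D_\eta \subset \mathcal A$. Here $\mathcal D_0$ is trivial, each $\mathcal D_{\xi+1}$ is a compact extension of $\mathcal D_\xi$, at limit ordinals $\mathcal D_\xi$ is generated by the earlier ones, and $\mathcal A$ is a weakly mixing extension of $\mathcal D_\eta$. We prove the CSZ property for each factor by transfinite induction, using three lemmas: (a) the CSZ property passes from $\mathcal D$ to a weakly mixing extension $\mathcal E$; (b) it passes from $\mathcal D$ to a compact extension $\mathcal E$; (c) it passes to inverse limits of CSZ factors. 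The trivial factor is CSZ trivially. Lemma (c) is standard: approximate $\mathbf 1_A$ in $L^2$ by a set $A'$ measurable in an earlier factor with $\mu(A\triangle A')$ tiny relative to $\mu(A)$, and use the usual $\epsilon/(t+1)$ argument. Only the liminf lower bound for $A'$ is needed.

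For the weakly mixing step (a), we show for bounded $f_1,\dots,f_t\in L^\infty(\mathcal E)$ that
\begin{equation}
\lim_{N\to\infty}\Bigl\lVert \Expectation_N \prod_{s=1}^t T^{sk_n}f_s-\Expectation_N\prod_{s=1}^t T^{sk_n}\mathbb E(f_s\mid\mathcal D)\Bigr\rVert_{L^2}=0 .
\end{equation}
We argue by induction on $t$ with the van der Corput lemma, in the relative form of Furstenberg. This requires controlling the correlations $\langle u_n,u_{n+h}\rangle$ with $u_n=\prod_s T^{sk_n}f_s$. The new difficulty is that $k_{n+h}-k_n$ is not constant in $n$. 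Here Lemma~\ref{l:Diff}(iii) is the key input. For fixed $h$ the set of $n$ splits into the sets $\Delta_h^*(k,s)$, and each is a union of progressions with step a power of $\lvert D\rvert$. On each such progression the shift $k_{n+h}-k_n=k$ is constant, so $\langle u_n,u_{n+h}\rangle$ becomes an average of $\prod_s T^{sk_n}(f_s\cdot T^{sk}\bar f_s)$ along $n$ in a progression. The relative weak mixing of $\mathcal E\times_{\mathcal D}\mathcal E$ is used together with the ergodic theorem along progressions \eqref{e:progressions}, applied to the relative product via Proposition~\ref{productweakly}. Averaging over $h$ and then over the shifts $k$ kills the contribution of $f_{s}$ with $\mathbb E(f_s\mid \mathcal D)=0$. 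We need the density of $n$ for which $k$ is a given value to be summable over $k$, so that tails can be truncated. Once the limit identity holds, positivity for $A$ follows from positivity of $\mathbb E(\mathbf 1_A\mid\mathcal D)$ on a set of positive measure and the CSZ property of $\mathcal D$. We apply it to the set where this conditional expectation exceeds $\mu(A)/2$.

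For the compact step (b), we take $A$ with $\mu(A)>0$. We pass to a subset $A'$ on which $\mathbf 1_A$ is well approximated, fiberwise, by a finite $\epsilon$-net $g_1,\dots,g_r$ from the definition of conditional compactness. We then run Furstenberg's van der Waerden style coloring argument. For $x\in B\in\mathcal D$ and $n$ in a set $S$ of "good" steps, we color each step by the net element nearest to $T^{k_n}\mathbf 1_{A}$ on the fiber. Choosing the good steps requires a van der Waerden theorem along the Cantor set. By Lemma~\ref{l:Diff}(ii), $k_{\lvert D\rvert^i n+j}=b^ik_n+k_j$, so the set $\{b^i d\,m: m\}$ lies in $\mathcal K$ whenever $d\in D$. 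Hence $\mathcal K$ contains arbitrarily long IP-type structures and dilated copies of $\mathcal K$. This lets us find, inside any positive density set of indices $n$ (given by the CSZ property of $\mathcal D$ applied to $B$), configurations $n$ for which the $t$ multiples $k_n,2k_n,\dots$ line up with a monochromatic progression. The positivity of $\pi_q(0)$ from Lemma~\ref{l:Allq} handles the rational spectrum, and the uniform ergodic theorem \eqref{e:Uniform} upgrades "some $n$" to a positive lower density of $n$.

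The main obstacle is step (a). The van der Corput differencing produces averages over $n$ in the sets $\Delta^*_h(k,s)$, not over all $n$, and the number of distinct differences $k$ is unbounded. Our first attempt is to truncate to progressions of step $\lvert D\rvert^i$ with $i\le I$, which carry all but $O(\lvert D\rvert^{-I})$ of the density. On these progressions we apply \eqref{e:progressions} to the weakly mixing relative product system, and then let $I\to\infty$.
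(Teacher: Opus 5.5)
\paragraph{Where the proposal stands.} Your architecture is the paper's: a Furstenberg--Zimmer tower, and passing the property through weakly mixing extensions, compact extensions and limits. Your step (a) also follows essentially the paper's route. You use van der Corput, split the $n$ by the value of $k_{n+h}-k_n$ into progressions of step $\lvert D\rvert^i$, apply the induction hypothesis to $T^{b^i}$ via $k_{\lvert D\rvert^i a+c}=b^ik_a+k_c$, and finish on a (relative) product system.

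\paragraph{The gap: step (b).} Step (b) is not yet an argument, and, contrary to your last paragraph, it is where the real difficulty lies. Coloring the indices $n$ by the net element nearest $T^{k_n}\mathbf 1_A$ says nothing about $T^{sk_n}\mathbf 1_A$ for $s\geq 2$. You must color the times $m$ at which $T^m\mathbf 1_A$ is evaluated, with a free base point.

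In Furstenberg's template those times are $m=vk_n$, $v\leq W$. Here $k_n$ comes from the CSZ property of $\mathcal D$ applied to $B=\{\mathbb E(\mathbf 1_A\mid\mathcal D)>\delta\}$ and a progression of length $W$. A monochromatic progression $a+sr$ in $[W]$ then gives a configuration with common difference $rk_n$. Even if a Cantor van der Waerden theorem makes $r\in\mathcal K$, the product $rk_n$ is in general not in $\mathcal K$, let alone equal to $k_{n'}$ for $n'$ in a set of positive lower density.

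The multipliers that do preserve $\mathcal K$ are the powers $b^i$ (Lemma \ref{l:Diff}(ii)). They cannot serve as van der Waerden differences: coloring $\mathbb Z$ by residues modulo $b+1$ has no monochromatic pair $x,x+b^i$. So the proposal is missing a mechanism that keeps the final step inside $\mathcal K$, with positive lower density in $n$.

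\paragraph{The auxiliary facts do not fill it.}
\begin{itemize}
\item The inclusion $\{b^idm\colon m\in\mathbb N\}\subset\mathcal K$ is false. For $b=3$, $D=\{0,2\}$ we have $2\cdot 2=4=(11)_3$. What is true is only that $b^i\mathcal K\subset\mathcal K$ and that finite sums of distinct $db^i$ lie in $\mathcal K$.
\item The uniform ergodic theorem \eqref{e:Uniform} is available only for weakly mixing systems.
\item $\pi_q(0)>0$ only handles rational rotations.
\end{itemize}
So neither of the last two upgrades ``some $n$'' to positive lower density over a compact extension of an arbitrary $\mathcal D$. There the density must come from the CSZ property of $\mathcal D$. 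It has to be combined with a fiberwise lower bound, of size about $\delta^{t+1}$, that is independent of $\epsilon$ and $W$.

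\paragraph{Comparison with the paper.} The paper treats (b) in two stages. It first proves a topological Cantor van der Waerden theorem (Theorem \ref{thmCvdW}) by color focusing, splitting $r=r_0+b^hr_1$ with $r_0<b^h$; this uses $0\in D$. It then colors $w\in[W]$ fiberwise by the net element nearest $T^{wn}f$, and passes to $\mathcal D$ with conditional Cauchy--Schwarz and Jensen (Theorem \ref{t:compactExtension}). As written, this also yields configurations $T^{(w_0+sk)n}f$ with common difference $kn$. So the point above is exactly what a complete argument must address.

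\paragraph{A smaller, fixable issue in (c).} Knowing $\mu(A\triangle A')$ is small is not enough, because the CSZ constant of $A'$ is not controlled by $\mu(A')$. Instead take $A'=\{\mathbb E(\mathbf 1_A\mid\mathcal D_\alpha)>1-\frac{1}{2(t+1)}\}$. Then use the bound $\mathbb E\bigl(\prod_{s=0}^{t}T^{sk_n}\mathbf 1_A\mid\mathcal D_\alpha\bigr)\geq\frac12$ on $\bigcap_{s}T^{-sk_n}A'$.
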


By the Furstenberg Correspondence principle \cite{MR0498471}*{Thm 1.1}, this proves Theorem \ref{t:CSZ}.  
The proof of Theorem \ref{t:ECSZ} 
proceeds by factors.  For a fixed system $(X, \mathcal A, m,T)$, the factors of $X$ are indexed by complete $T$ invariant sigma algebras $\mathcal B \subset \mathcal A $.  Let $\mathbf K$ denote the collection of factors that are CSZ.   They are ordered by inclusion.   
Following \cite{MR670131}, if there holds 
\begin{itemize}
    \item  The set $\mathbf K$ contains a maximal factor. 

    \item No \emph{proper} factor is maximal.  
\end{itemize}
Then, the  $\mathcal A$ is the maximal factor.  And our main theorem is proved. 

The first point is seen by the line  of reasoning of \cite{MR670131}*{Proposition 7.26}. 
The second point is the main novelty of this paper. It is proved in the following sections of this paper.  

\begin{lemma} \label{l:Extensions}
Let $(X,\mathcal A,m,T)$ be a measure preserving system, with factor  
$\mathcal D$, and extension $\mathcal E$.  Then, 
\begin{enumerate}
\item  If $(X,\mathcal A,m,T)$ 
    is weak-mixing, then it is CSZ. 

\item If $\mathcal E$ is a weak-mixing extension of CSZ system  $\mathcal D$, then $\mathcal E$ is CSZ. 
    
\item  If $(X,\mathcal A,m,T)$ 
    is compact, then it is CSZ. 

\item If $\mathcal E$ is a compact extension of  system  $\mathcal D$, then $\mathcal E$ is CSZ. 
\end{enumerate}
    
\end{lemma}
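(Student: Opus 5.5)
I will prove the four items in order, following Furstenberg's scheme. The Cantor-specific input enters only through Theorem \ref{CantorErgodic}, Theorem \ref{t:Progressions}, Lemma \ref{l:Allq} and Lemma \ref{l:Diff}.

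\emph{Item (1).} I will establish the stronger statement
\begin{equation}
\lim_{N\to\infty} \Bigl\lVert \Expectation_N \prod_{s=1}^t T^{sk_n} f_s - \prod_{s=1}^t \Expectation_X f_s \Bigr\rVert_{L^2} = 0
\end{equation}
for bounded $f_s$, by induction on $t$. Writing $f_t = \Expectation_X f_t + (f_t - \Expectation_X f_t)$, it suffices to show that the average is $o(1)$ when some $f_s$ has mean zero. For this I will use a van der Corput lemma adapted to the sequence $k_n$. Set $u_n = \prod_s T^{sk_n} f_s$; then $\langle u_{n+h}, u_n\rangle = \int \prod_s T^{s(k_{n+h}-k_n)} (f_s \circ T^{\cdots})\,\overline{f_s}$. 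Here I invoke Lemma \ref{l:Diff}(iii): for fixed $h$, the set of $n$ decomposes according to the value $k=k_{n+h}-k_n$ into arithmetic progressions with step a power of $\lvert D\rvert$. On each piece the inner product becomes an average of $\int \prod_s T^{sk_n}(T^{sk} f_s \overline{f_s})\,dm$ over a progression. Using Theorem \ref{t:Progressions}(\ref{e:progressions}) and the inductive hypothesis along progressions (which must be carried in the induction as well), these averages tend to $\prod_s \Expectation_X(T^{sk}f_s\overline{f_s})$. Averaging over $h$, the differences $k$ range over $\mathcal K$-differences with controlled frequencies. Weak mixing of $T^{s_0}$, or of the product system in Proposition \ref{productweakly}, then makes $\Expectation_h \lvert\langle T^{s_0 k} f_{s_0},f_{s_0}\rangle\rvert \to 0$, provided the distribution of these differences does not concentrate. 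To secure this I will use the uniform ergodic theorem \eqref{e:Uniform}. Having the limit, CSZ follows with the value $\mu(A)^{t+1}$.

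\emph{Item (2)} is the relative version of the same argument, with conditional inner products $\langle\cdot,\cdot\rangle_{\mathcal E\mid\mathcal D}$ in place of scalar ones. The conclusion is that $\Expectation_N \prod T^{sk_n}f_s$ is asymptotically equal to $\Expectation_N\prod T^{sk_n}\mathbb E(f_s\mid\mathcal D)$. Then I apply CSZ of $\mathcal D$ to $\mathbb E(\mathbf 1_A\mid\mathcal D)$, which is bounded below on a set of positive measure.

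\emph{Items (3) and (4).} For (3), the compact case, $\mathbf 1_A$ is almost periodic and $T$ has discrete spectrum. Choose $q$ such that $\lVert T^{q m}\mathbf 1_A - \mathbf 1_A\rVert_2 < \epsilon$ for a syndetic set of multiples; by rational approximation, $\lVert T^{k}\mathbf 1_A-\mathbf 1_A\rVert<\epsilon$ whenever $k\equiv 0 \bmod q$ and $k$ lies in a suitable Bohr set. The irrational eigenvalues are handled by Lemma \ref{l:irrational}, and the rational ones by Lemma \ref{l:Allq}, which gives that $k_n\equiv 0\bmod q$ for a positive proportion $\pi_q(0)$ of $n$. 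For such $n$ the integrand is at least $\mu(A)-O(t\epsilon)$, which yields a positive liminf. Item (4) follows Furstenberg's van der Waerden coloring argument on fibers. Restrict to $n$ with $k_n$ in the positive-density set where finitely many conditional approximants $g_q$ return, and run the coloring argument over the progression structure of Lemma \ref{l:Diff}(ii). This uses that $k_{\lvert D\rvert^i n+j}=b^ik_n+k_j$ produces IP-type subsets of positive density.

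I expect the main obstacle to be the van der Corput step in (1) and (2). The differences $k_{n+h}-k_n$ are not a fixed shift, so I will need Lemma \ref{l:Diff}(iii), together with a uniform version of the inductive hypothesis along progressions of step $\lvert D\rvert^i$, to make the induction close. I will try to strengthen the inductive statement to cover averages over such progressions, uniformly in the starting point.
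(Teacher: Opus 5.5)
Items (1) and (2) follow the paper's route: van der Corput, splitting $n$ according to $k_{n+h}-k_n$ via Lemma \ref{l:Diff}(iii), induction along the resulting progressions, then the uniform ergodic theorem. There are two small points.
\begin{itemize}
\item You need not strengthen the induction. Since $k_{a\lvert D\rvert^i+c}=b^ik_a+k_c$ for $0\le c<\lvert D\rvert^i$, an average along such a progression is a Cantor average for the weakly mixing system $T^{b^i}$, applied to the functions $T^{(s-1)k_c}G_s$. This is how the paper closes the induction.
\item If you keep the absolute values from Lemma \ref{vdC} and bound by the single mean-zero factor, \eqref{e:Uniform} does not apply directly. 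Square first: $\lvert\langle T^{s_0j}f,f\rangle\rvert^2=\langle R^{j}G,G\rangle$ with $R=T^{s_0}\times T^{s_0}$ and $G=f\otimes\overline f$, which has mean zero. Then use Cauchy--Schwarz in $(n,h)$ and \eqref{e:Uniform} for the weakly mixing $R$.
\end{itemize}
For (3) your route genuinely differs from the paper's and is more general. The paper approximates $f$ by a $T^q$-invariant $h$, which presupposes rational spectrum. A compact system may have irrational eigenvalues (an irrational rotation), and your Bohr-set argument handles them. What you must add is a joint statement, not Lemma \ref{l:irrational} for a single $\alpha$. Let $\lambda$ be the finitely many frequencies used, and choose $q$ so that every rational value of $m\cdot\lambda$ ($m\in\mathbb Z^J$) has denominator dividing $q$. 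Weyl's criterion, with Lemma \ref{l:irrational} applied to the irrationals $m\cdot\lambda+a/q$ and with Lemma \ref{l:Allq}, shows that along $\{n\colon q\mid k_n\}$ the vector $k_n\lambda$ equidistributes on a subtorus. So the good $n$ have lower density at least $\pi_q(0)$ times a positive Haar measure.

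The genuine gap is (4), and it has two parts.

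First, there is no ``positive-density set where finitely many conditional approximants return''. Conditional compactness only says that each $T^uf$ is fiberwise $\epsilon$-close to \emph{some} $g_\ell$, with $\ell$ depending on both $u$ and the base point. Simultaneous fiberwise returns for $s=0,\dots,t$ on a set of positive measure must be manufactured by combining a van der Waerden coloring with multiple recurrence of $\mathcal D$. Your sketch never says where the hypothesis that $\mathcal D$ is CSZ enters.

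Second, and more seriously, the Cantor structure breaks the classical mechanism. Furstenberg applies base recurrence to $\{jn\}_{j\le W}$, finds a monochromatic $a+sr$ in $[W]$, and ends with step $rn$. Positive density survives because multiplying by some $r\le W$ costs at most a factor $W$. Run with $\{jk_n\}_{j\le W}$, which is the pattern CSZ of $\mathcal D$ controls, the same argument yields steps $rk_n$. These are in general not in $\mathcal K$: in $\mathcal K_{3,\{0,1\}}$, $2k\notin\mathcal K$ for every nonzero $k\in\mathcal K$. Lemma \ref{l:Diff}(ii) supplies only the affine maps $k\mapsto b^ik+k_j$, i.e.\ $n\mapsto\lvert D\rvert^in+j$ on indices, not multiplication. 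The IP-type sums that do stay in $\mathcal K$ occupy a density-zero set of indices. For example, $k_n+b^hk_n=k_{n(1+\lvert D\rvert^h)}$ for $n<\lvert D\rvert^h$, indexing as in Lemma \ref{l:Diff}.

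So the missing idea is a coloring/recurrence scheme whose monochromatic configurations have steps $k_m$, with $m$ ranging over a set of positive lower density. Be aware that the paper's argument does not supply this step either. It uses Theorem \ref{thmCvdW} on $[W]$, then classical multiple recurrence of $\prod_{v\in[W]}T^{vn}g^{t+1}$ on $\mathcal D$. This ends with steps $k(x)\,n$, $k(x)\in\mathcal K$, and never uses that $\mathcal D$ is CSZ.
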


We need this additional fact 
\cite{MR0498471} concerning factors of a dynamical system.  

\begin{lemma}
    Let $\mathcal D$ be a factor of a measure preserving system $(X,\mathcal A,m,T)$. 
    Either $\mathcal A \mid \mathcal D$ is a weakly mixing extension, or there is a non-trivial  extension $\mathcal E\supset \mathcal D$  which is a compact extension.   
\end{lemma}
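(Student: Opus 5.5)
The plan is to follow Furstenberg's original dichotomy argument, built on the relatively independent self-joining of $X$ over $\mathcal D$. Let $\mu\times_{\mathcal D}\mu$ denote the measure on $X\times X$ given by
\begin{equation}
    \int F\otimes G \; d(\mu\times_{\mathcal D}\mu) = \int_X \mathbb E(F\mid\mathcal D)\,\mathbb E(G\mid\mathcal D)\; dm .
\end{equation}
It is invariant under $T\times T$. The first step is the relative analogue of the equivalence ``weak mixing $\iff X\times X$ ergodic'' from the Proposition on weak mixing. Suppose $\mathcal A\mid\mathcal D$ is not weakly mixing. Then there is $f\in L^2(\mathcal A)$ with $\mathbb E(f\mid\mathcal D)=0$ and
\begin{equation}
    \limsup_N \Expectation_N \bigl\lVert \langle T^n f, f\rangle_{\mathcal A\mid\mathcal D}\bigr\rVert_{L^2(\mathcal D)} >0 .
\end{equation}
By density and the conditional Cauchy--Schwarz inequality, I may take $f$ bounded. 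Rewriting $\lvert\langle T^nf,f\rangle_{\mathcal A\mid \mathcal D}\rvert^2$ as an inner product of $(T\times T)^n(f\otimes\bar f)$ with $f\otimes \bar f$ in $L^2(\mu\times_{\mathcal D}\mu)$, the von Neumann ergodic theorem for $T\times T$ shows that the projection $H$ of $f\otimes\bar f$ onto the $T\times T$-invariant functions is nonzero. On the other hand, $f\otimes\bar f$ is orthogonal to every function of the form $d\otimes 1$ with $d\in L^2(\mathcal D)$, because $\mathbb E(f\mid\mathcal D)=0$. Hence $H$ is a nontrivial invariant function that is not measurable with respect to $\mathcal D$.

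Next I use $H$ as a kernel. Truncating, I may assume $H$ is bounded (replace $H$ by $H\mathbf 1_{\lvert H\rvert\le M}$, which is still invariant and still nonzero off $\mathcal D$ for large $M$). Define
\begin{equation}
    \Phi g (x) = \int H(x,y)\, g(y)\; d\mu_x(y),
\end{equation}
where $\mu=\int \mu_x\,dm$ is the disintegration of $m$ over $\mathcal D$. Invariance of $H$ gives $\Phi T = T\Phi$. Moreover, $\Phi$ is a ``$\mathcal D$-Hilbert--Schmidt'' operator, since for almost every fibre its kernel lies in $L^2(\mu_x\times\mu_x)$ with norm bounded in $L^\infty(\mathcal D)$.

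The key claim is that every $\Phi g$ with $g\in L^\infty$ is conditionally compact. For this I approximate $H$ by finite sums $\sum_i a_i\otimes b_i$ and restrict to sets of $\mathcal D$-measure close to $1$ where the fibre-wise approximation is uniform. Then $T^n\Phi g=\Phi T^n g$ lies, fibre-wise, within $\epsilon$ of a finitely generated module over $L^\infty(\mathcal D)$ with bounded coefficients. Covering the bounded coefficient set by finitely many balls produces the functions $g_1,\ldots,g_r$ required in the definition of conditional compactness. I expect this to be the main obstacle: it needs the measurable selection or uniformization over fibres, and one must handle the exceptional $\mathcal D$-sets with the $L^\infty(\mathcal D)$ norm in the definition. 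The standard remedy, which I would try first, is to cut the exceptional set out by multiplying by $\mathbf 1_{D}$ for some $D\in\mathcal D$ of measure near $1$; this still gives density of conditionally compact functions.

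Finally, let $\mathcal E$ be the $\sigma$-algebra generated by $\mathcal D$ together with all the functions $\Phi g$. It is $T$-invariant because $\Phi$ commutes with $T$. Conditionally compact functions form an $L^\infty(\mathcal D)$-module closed under products of bounded elements, so they are dense in $L^2(\mathcal E)$, and $\mathcal E$ is a compact extension. It is nontrivial: if every $\Phi g$ were $\mathcal D$-measurable, then $\langle \Phi\bar f, \bar f\rangle$ would vanish because $\mathbb E(f\mid \mathcal D)=0$. But $\langle \Phi\bar f, \bar f\rangle$ equals $\int \overline{f\otimes\bar f}\cdot H = \lVert H\rVert^2>0$, up to the harmless truncation of $H$. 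This contradiction gives $\mathcal E\supsetneq\mathcal D$.
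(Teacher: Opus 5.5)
The paper gives no proof of this lemma (it is quoted from Furstenberg), and your outline is Furstenberg's argument. The joining step and the final generation and nontriviality step are fine. The step you flag as the main obstacle, however, has a genuine gap, and the remedy you propose does not close it.

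Let $H'=\sum_i a_i\otimes b_i$ and $D=\{y:\lVert H_y-H'_y\rVert_{L^2(\mu_y\times\mu_y)}<\epsilon\}$. By invariance of $H$, the fibre of $T^n\Phi g$ over $y$ is $\Phi_y(T^ng)$, computed with the kernel at $y$. So your approximation controls it only for $y\in D$. But $T^n(\mathbf 1_D\Phi g)=\mathbf 1_{T^{-n}D}\,\Phi(T^ng)$: the cutoff is read at $T^ny$, not at $y$. Take $y\notin D$ with $T^ny\in D$; for a.e.\ $y$ this holds for a set of $n$ of density $m(D)$. The fibre is then $\Phi_y(T^ng)$, with the uncontrolled kernel $H_y$. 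If you use the kernel at $T^ny\in D$ instead, you get approximately $\sum_i (T^na_i)\,c_i(T^ny)$, whose generators move with $n$, so no fixed finitely generated module appears. Cutting off $g$ instead changes nothing, since $\Phi(\mathbf 1_Dg)=\mathbf 1_D\Phi g$. So neither $\mathbf 1_D\Phi g$ nor $\Phi g$ has been shown to be conditionally compact.

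What is missing is that the exceptional set must be $T$-invariant, which no fixed approximant provides; the invariance of $H$ supplies this. Let $U_y\colon L^2(\mu_{Ty})\to L^2(\mu_y)$ be the unitary $U_y\phi=\phi\circ T$. One checks $\Phi_{Ty}=U_y^{-1}\Phi_yU_y$, so the singular values $s_k(y)$ of $\Phi_y$ are $T$-invariant functions of $y$, hence a.e.\ constant since $\mathcal D$ is ergodic. (Without ergodicity, cut off by the invariant sets where $\sum_{k>K}s_k(y)^2$ is small; such cutoffs commute with $T$.)

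Now set $P_y=\mathbf 1_{(\lambda,\infty)}(\Phi_y^*\Phi_y)$. It has rank $K$ independent of $y$, and $\lVert\Phi_y-\Phi_yP_y\rVert_{\mathrm{op}}\le\sqrt\lambda$. Choose measurably a fibrewise orthonormal basis $e_1,\dots,e_K$ of the range of $\Phi_yP_y$. Then every fibre of every $T^n\Phi g$ lies within $\sqrt\lambda\,\lVert g\rVert_\infty$ of $\{\sum_ic_ie_i:\lvert c_i\rvert\le\lVert H\rVert_\infty\lVert g\rVert_\infty\}$. Your covering-by-balls step then gives conditional compactness of $\Phi g$ with no exceptional set. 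This is in the standard sense, where the index $q$ may depend on the base point; the paper's display, with $\min_q$ outside the $L^\infty(\mathcal D)$ norm, already fails for $\mathbf 1_A$, $A\in\mathcal D$, $0<m(A)<1$, when $\mathcal D$ is weakly mixing.

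Two small corrections. First, $\langle\Phi\bar f,\bar f\rangle=\langle H,\bar f\otimes f\rangle$, which need not equal $\lVert H\rVert^2$; use $\langle\Phi f,f\rangle=\langle H,f\otimes\bar f\rangle=\lVert H\rVert^2>0$ instead. Second, no truncation of $H$ is needed: $H$ is a limit of ergodic averages of $f\otimes\bar f$, so it is already bounded by $\lVert f\rVert_\infty^2$.
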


And, when we address the weakly mixing cases, we will use this   standard fact. 

\begin{lemma}[van der Corput Inequality]  \label{vdC}
Let $a_1, a_2, \ldots, a_n $ be complex numbers of modulus at most $1$. For all integers $1\leq H \leq N$, we have 
\begin{equation}
    \label{vdc} 
\lvert \mathbb{E}_{1\leq j \leq N} a_j \rvert ^2 
\leq 
\mathbb{E}_{1\leq h\leq H} 
\bigl\lvert 
\mathbb{E}_{1\leq j \leq N-h} a_{j+h}\overline{a_j} 
\bigr\rvert
+ H^{-1} + \frac H N. 
\end{equation}
\end{lemma}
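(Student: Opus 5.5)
The plan is the classical van der Corput argument: average the sequence against its own short shifts, apply Cauchy--Schwarz once, and expand the square into correlations, keeping exact track of the constants. The coefficient $1$ in front of the correlation average is the delicate point. The standard argument naturally yields a factor close to $2$ there, and I have not yet found how to remove it.

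\emph{Averaging over shifts.} Extend $a_j$ by $0$ outside $[1,N]$ and write $S=\sum_{j=1}^N a_j$. For any $1\le H\le N$,
\begin{equation}
HS=\sum_{j=2-H}^{N}\ \sum_{h=0}^{H-1} a_{j+h},
\end{equation}
and the outer sum has $N+H-1$ terms. Cauchy--Schwarz in $j$ gives
\begin{equation}
H^2\lvert S\rvert^2\le (N+H-1)\sum_{h,h'=0}^{H-1}\sum_{j} a_{j+h}\overline{a_{j+h'}} .
\end{equation}

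\emph{Expanding into correlations.} Group the pairs $(h,h')$ by $d=\lvert h-h'\rvert$, and write $c_d=\mathbb{E}_{1\le j\le N-d}\,a_{j+d}\overline{a_j}$. The diagonal $d=0$ contributes at most $HN$, since $\lvert a_j\rvert\le1$. Each $1\le d\le H-1$ arises from $2(H-d)$ pairs, and each such pair contributes $(N-d)c_d$ or its conjugate. Dividing by $H^2N^2$, the diagonal term is
\begin{equation}
\frac{N+H-1}{HN}=\frac1H+\frac{H-1}{HN}\le \frac1H+\frac HN ,
\end{equation}
which is exactly the error allowed in \eqref{vdc}. The off-diagonal terms contribute
\begin{equation}
\frac1H\sum_{d=1}^{H-1} w_d\,\lvert c_d\rvert,\qquad w_d=\frac{2(H-d)(N-d)(N+H-1)}{HN^2}.
\end{equation}

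\emph{The main obstacle.} To obtain \eqref{vdc} one needs $w_d\le1$ for each $d$, possibly after absorbing some excess into the slack $H^{-1}+H/N$. But $w_d\approx 2(1-d/H)$, which exceeds $1$ for $d<H/2$. So the crude expansion gives only the weighted Fejér-type bound, with a factor near $2$ on the small lags. My first idea is to use positivity rather than the triangle inequality. The quantity $\sum_{\lvert d\rvert<H}(1-\lvert d\rvert/H)\,c_d$ (with $c_{-d}=\overline{c_d}$) is, up to the boundary error, a nonnegative Fejér sum, so the pair of lags $d$ and $-d$ should be counted once rather than twice. Concretely, I would run the computation above without taking absolute values, pair the terms $c_d$ and $\overline{c_d}$ into $2\operatorname{Re} c_d$, and try to show that the symmetric sum is controlled by $\sum_{d=1}^{H}\lvert c_d\rvert$ with coefficient $1$. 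The lags $H/2\le d\le H$, which carry weight below $1$, would supply part of the needed room, and the remainder would be charged to $H^{-1}+H/N$ using $\lvert c_d\rvert\le1$.

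Whatever form the constant finally takes, the argument only uses two facts: the correlation average tends to $0$ as $N\to\infty$ and then $H\to\infty$, and the error terms vanish in the same limit. So even if I cannot close the coefficient-$1$ step, the downstream weak-mixing arguments in the proof of Lemma~\ref{l:Extensions} would go through unchanged.
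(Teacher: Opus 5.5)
\textbf{Gap.} The step you flag cannot be closed, by positivity or by any other device. With coefficient $1$ on the correlation average and error $H^{-1}+H/N$, the inequality is false as stated. (The paper gives no proof to compare with; it quotes the lemma as a standard fact.)

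Take $\rho=2/5$, $m=(1+\sqrt{\rho})^{-1}$ and $a_j=m\bigl(1+\sqrt{\rho}\,e^{2\pi i j\theta}\bigr)$, so that $\lvert a_j\rvert\le 1$. Fix $H$ and $\theta\notin\mathbb{Z}$, and let $N\to\infty$. Then $\lvert\mathbb{E}_{j\le N}a_j\rvert^2\to m^2\approx 0.375$ and $c_h\to m^2\bigl(1+\rho\,e^{2\pi i h\theta}\bigr)$, so the lemma would force
\[
\frac1H\sum_{h=1}^{H}\bigl\lvert 1+\rho\,e^{2\pi i h\theta}\bigr\rvert\;\ge\;1-\frac{1}{m^2H}.
\]
Now choose $\theta=\omega/(2\pi H)$ with $\omega\approx 4.49$, the point where $\sin x/x$ attains its negative minimum $\approx-0.217$. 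The left side is a Riemann sum for $\omega^{-1}\int_0^{\omega}\lvert 1+\rho e^{i\phi}\rvert\,d\phi\approx 0.954$, while the right side tends to $1$. Concretely, for $H=100$ and $N=10^6$ one gets $\lvert\mathbb{E}_{j\le N}a_j\rvert^2\approx 0.375$, whereas the right-hand side of the lemma is about $0.357+0.010+0.0001\approx 0.367$.

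This also shows why letting the lags $H/2\le d\le H$ pay for the short ones cannot work. Here $\operatorname{Re}c_d\approx\lvert c_d\rvert\approx(1+\rho)m^2$ at short lags, where your weights $w_d$ are near $2$. Meanwhile $\lvert c_d\rvert$ drops to about $(1-\rho)m^2$ at lags near $0.7H$, where the weights are below $1$. The only positivity available is that of the Fej\'er kernel, which counts $d$ and $-d$ separately. The uniform kernel $H^{-1}\sum_{h=1}^{H}e^{-2\pi i hx}$, by contrast, has real part as negative as about $-0.217$, and placing a frequency there is exactly what breaks the coefficient-$1$ bound.

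\textbf{What you did prove is correct.} It is the standard form of the lemma. Since $w_d\le 2(N+H-1)/N$, your computation gives
\[
\Bigl\lvert\mathbb{E}_{1\le j\le N}a_j\Bigr\rvert^2\le 2\,\mathbb{E}_{1\le h\le H}\Bigl\lvert\mathbb{E}_{1\le j\le N-h}a_{j+h}\overline{a_j}\Bigr\rvert+\frac1H+\frac{3H}{N}.
\]
Your closing remark is the right resolution. The paper only uses the lemma to conclude that a quantity is $\ll\epsilon$ after taking $H$ and then $N$ large, so the factor $2$ is harmless, and the lemma should be stated with it (or with $\ll$). Note also that the paper applies the lemma to $L^2(X)$-valued sequences such as $u_n$, not to complex numbers. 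Your Cauchy--Schwarz argument carries over verbatim to a Hilbert space, so that version comes for free.
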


\section{Weak Mixing Systems and Extensions}

We show that our ergodic theorem holds for weakly mixing systems. 
And  that it holds for weakly mixing extensions of  $CSZ$ systems. 
Throughout, let $\mathcal K = \mathcal K_{b,D}$ be a classical Cantor set, with $0\in D$. Let $\{k_n\colon n\geq 1\} $ be its elements written in increasing order.  

 \begin{theorem}   For any weakly mixing system $(X,m,T)$, 
 and functions $f_1 , \ldots, f_t \in L^\infty (X)$, we have 
 \begin{equation}
  \lim_N  \Expectation_{N}  \prod _{s=1}^t 
  T^{s k_n}f_s =\prod _{s=1}^t \int f_s \;d\mu. 
  \end{equation}
  The limit holds in $L^2( X)$ norm.  
    \end{theorem}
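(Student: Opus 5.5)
We argue by induction on $t$, following the classical Furstenberg argument for weakly mixing systems, with the van der Corput inequality (Lemma \ref{vdC}) adapted to the Cantor sequence. The case $t=1$ is Theorem \ref{CantorErgodic}: a weakly mixing system has no nonconstant eigenfunctions, so the only surviving term is $r=0$. For the inductive step we write $f_t = \int f_t + (f_t - \int f_t)$ and expand the product. The term containing $\int f_t$ is handled by the inductive hypothesis for $t-1$ functions. So it suffices to show that $\Expectation_N \prod_{s=1}^t T^{sk_n} f_s \to 0$ in $L^2$ whenever some $f_{s_0}$ has mean zero. By relabeling we may take $s_0=t$, and we allow the inductive hypothesis to be applied to arbitrary weakly mixing transformations, in particular powers and products of $T$.

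Next we apply the Hilbert-space van der Corput inequality to $u_n=\prod_s T^{sk_n}f_s$. Here the naive difference $k_{n+h}-k_n$ is not constant, so instead of a shift in $n$ we exploit Lemma \ref{l:Diff}(ii). For a fixed power $Q=|D|^i$ we split $[N]$ into residue classes $n=Qm+j$, where $k_{Qm+j}=b^i k_m + k_j$. On each class $u_{Qm+j}=\prod_s T^{s k_j}\, (T^{b^i})^{s k_m} f_s$, and it suffices to show that for each fixed $j$ the averages in $m$ tend to zero.

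We then run van der Corput in $m$ with shift $h$. This produces
\[
\langle u_{m+h},u_m\rangle=\int \prod_s (T^{b^i})^{s k_m}\bigl(f_s\cdot T^{s(k_{m+h}-k_m)b^i}\overline{f_s}\bigr)\,dm.
\]
Using Lemma \ref{l:Diff}(iii), we decompose $m$ according to the value $\kappa=k_{m+h}-k_m$; each level set is a union of progressions with step a power of $|D|$. On each such progression we apply the inductive hypothesis for $t-1$ functions (after composing with $T^{-k_m}$ to remove one factor) together with the progression version \eqref{e:progressions} of Theorem \ref{t:Progressions}. This gives the limit $\prod_s \lvert\int f_s\, T^{s\kappa b^i}\overline{f_s}\rvert$, bounded by $\lvert\langle f_t, T^{t\kappa b^i}f_t\rangle\rvert$. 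Averaging over $h\le H$ and over $\kappa$ weighted by frequencies, weak mixing of $T^{tb^i}$ makes this small, provided the values $\kappa$ realized as $h$ ranges over $[H]$ are spread out. For example, taking $h=Q'$ a power of $|D|$, $\kappa$ is essentially $b^{i'}$ times a digit difference, and these are distinct and growing.

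An alternative, cleaner route, which we would try first, is to use Proposition \ref{productweakly} to reduce to a one-function statement for $S=T\times T^2\times\cdots\times T^t$ on $X^t$. By a standard Hilbert-space argument, the quantity $\lVert \Expectation_N \prod_s T^{sk_n}f_s\rVert_2^2$ is controlled by averages of $\langle S^{k_n-k_{n'}}F,G\rangle$. The uniform ergodic theorem \eqref{e:Uniform} applied to the weakly mixing $S$ would then give the needed decay.

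The main obstacle is the van der Corput step. The differences $k_{n+h}-k_n$ depend on $n$, and the set of $n$ realizing a given difference has only a density, not full measure. We need uniform control, over finitely many difference values, of the Cantor averages restricted to progressions with step $|D|^i$. This is exactly where Lemma \ref{l:Diff}(iii) and \eqref{e:progressions} must be combined, and where the dependence of error terms on $h$ must be handled before taking $N\to\infty$ and then $H\to\infty$.
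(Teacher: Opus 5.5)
Your reduction to the case $\int f_t=0$, the van der Corput step, the splitting of $\{m\colon k_{m+h}-k_m=\kappa\}$ into progressions of step $|D|^{i}$, and the use of the inductive hypothesis for $T^{b^{i}}$ on each progression all match the paper. The preliminary split into classes $n=Qm+j$ is harmless but buys nothing: it only replaces $T$ by $T^{b^i}$ and $f_s$ by $T^{sk_j}f_s$.

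\textbf{The gap is in the final step, which is the heart of the proof.} You need $\Expectation_{h\le H}\sum_{\kappa} w_h(\kappa)\,\lvert\langle T^{t\kappa}f_t,f_t\rangle\rvert$ to be small, where $w_h(\kappa)$ is the density of $\{m\colon k_{m+h}-k_m=\kappa\}$. Weak mixing alone does not give this. It only says that $\lvert\langle T^{n}f,f\rangle\rvert$ is small in Cesàro mean over all $n$. The values $\kappa$ lie in $\mathcal K-\mathcal K$, which can have density zero (e.g.\ $b=10$, $D=\{0,1\}$). Your suggested mechanism fails on both counts:
\begin{enumerate}
\item Van der Corput forces an average over all $h\in[H]$, not only over powers of $|D|$.
\item ``Distinct and growing'' values such as $c\,b^{i'}$ give no decay at all. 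There are weakly mixing (Gaussian) systems that are rigid along $(b^{i})$, so $\langle T^{tcb^{i}}f_t,f_t\rangle\to\lVert f_t\rVert_2^2$ for a mean-zero $f_t$.
\end{enumerate}

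\textbf{The missing idea is the paper's: do not analyze the distribution of the $\kappa$'s at all.} On the level set, $\kappa=k_{m+h}-k_m$. Measure preservation then gives $\prod_s\langle T^{s\kappa}f_s,f_s\rangle=\langle S^{k_{m+h}}F,S^{k_m}F\rangle_{X^t}$, with $S=T\times T^2\times\cdots\times T^t$ and $F=f_1\otimes\cdots\otimes f_t$. So, up to $O(\epsilon)$, the weighted sum reassembles into $\Expectation_{m}\langle \Expectation_{h\le H}S^{k_{m+h}}F,\,S^{k_m}F\rangle$. Here $\lVert\Expectation_{h\le H}S^{k_{m+h}}F\rVert<\epsilon$ uniformly in $m$ once $H$ is large. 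This follows from the uniform (windowed) ergodic theorem of Theorem \ref{t:Progressions}, not merely Theorem \ref{CantorErgodic}, applied to the weakly mixing system $S$ (Proposition \ref{productweakly}), since $\int F=0$.

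With your absolute-value bound the same trick works after Cauchy--Schwarz in $(h,\kappa)$. Use $\lvert\langle T^{t\kappa}f_t,f_t\rangle\rvert^2=\langle (T^t\times T^t)^{\kappa}G,G\rangle$ with $G=f_t\otimes\overline{f_t}$, which has integral zero.

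\textbf{Your ``alternative route'' does not supply this step either.} The identity $\lVert\Expectation_N\prod_sT^{sk_n}f_s\rVert_2^2=\Expectation_{n,n'}\int_X\prod_sT^{sk_n}f_s\,\overline{T^{sk_{n'}}f_s}\,dm$ expresses the left side as an integral of $S^{k_n}F\cdot\overline{S^{k_{n'}}F}$ against the diagonal measure on $X^t$. That measure is not $S$-invariant, so the expression is not an average of $\langle S^{k_n-k_{n'}}F,G\rangle_{X^t}$. Passing from $X$ to the product measure on $X^t$ is exactly what van der Corput plus the inductive hypothesis accomplishes; no soft Hilbert-space argument does it.
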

This establishes the first point of Lemma \ref{l:Extensions}. 
The weakly mixing extension will be a refinement of this proof, making every step relative to the extension. 
And, the $\epsilon $ management of the proof is a little complicated. So our intention is to clearly address each step of the proof.  

\begin{proof} 
The proof of the Theorem is by induction on $t$ and all weakly mixing systems. The case of $t=1$ follows from the $L^2$ ergodic theorem already proved for Cantor sets.  
More precisely, the $L^2$ ergodic theorem, see Theorem \ref{CantorErgodic}, shows that the averages of $f$ along the Cantor set converge to 
a limit which is a linear combination of the rational spectra of $f$. 
Yet, as $T$ is weakly mixing, its only rational spectra is at the origin.   But, then we have
\begin{equation}
    \lim_N  \Expectation_{N}  T^{s k_n} f = 
    \int_X f \;d\mu . 
\end{equation} 

\medskip 
We move to the inductive step of the proof. We assume that the statement is true for $t-1\geq 1 $, and Cantor sets $\mathcal K$,  and prove it for $t$.  
Fix $f_1, \ldots, f_t\in L^\infty(X)$ bounded by $1$.  
Expanding each $f_s$ 
as its mean plus mean zero term gives us $2^t$ products. One term is trivial, as it only consists of constants. The induction hypothesis applies to all the other terms which have  at least one constant term, as then the product is only over at most $t-1$ non-constant terms.   
That leaves the last term in which every  $f_s$ has mean zero, and bounded by $1$ in $L^\infty$ norm.  
We show that the limit of the averages tends to zero in $L^2(X)$ norm. 

\smallskip 

To clarify the proof, we fix $0<\epsilon <1$, 
and the functions $f_1 , \ldots, f_t$.  
Let 
\begin{equation} \label{e:Xpower}
\mathsf{X}^t = (X^t, \mathcal A ^{t}, m ^{t}, S)
\end{equation}
be the product system, where $S= T \otimes T^2 \otimes 
\cdots \otimes T^t$.  
It is well known that $\mathsf{X}^t$ is also weakly mixing (Proposition \ref{productweakly}). 
 
We invoke the uniform ergodic Theorem \ref{t:Progressions}. 
Fix $H> 1/\epsilon $ so large that for function $F = f_1 \otimes \cdots \otimes f_t$  on the system $\mathsf X^t$, we have 
\begin{equation} \label{e:Huniform}
 \Bigl\lVert  \Expectation_{M\leq n \leq N } 
 S ^{k_n} F \Big\rVert_{L^2(X^t)} < \epsilon ,
 \qquad  N-M>H.  
\end{equation}

We turn to the  structure of the Cantor set, as described in Lemma \ref{l:Diff}. 
For $1\leq h<H$, recall that the differences $k _{n+h}-k_n$ 
take infinitely many values.  Set $\Delta ^{\ast}(h)$ to be those values. 
 For each $\Delta \in \Delta ^{\ast}$, 
the set of integers 
\begin{equation} \label{e:NDelta}
 \mathbb{N} _{\Delta } \coloneqq \{n \colon k _{n+h}-k_n =\Delta \}
 \end{equation}
is a union of progressions in $n$ of step size a power of  $\delta = \lvert D \rvert$. In particular, the set is of positive density. 
Considering any progression of step size 
$r=\delta^i$, note that for $0\leq c <r$ and integer $a\geq 1$, 
\begin{equation}
k _{ar+c} = b^i k_a + k_c. 
\end{equation}
In particular, our induction hypothesis applies when restricted to such a progression, 
as we just need to apply the induction hypothesis to the system given by $T ^{b^i}$.

 Fix $N>\frac{H}{\epsilon}$, and a a finite collection $\Delta ^{\sharp}(h) \subset \Delta ^{\ast}(h)$ such that 
\begin{equation} \label{e:Sharp}
\frac{\left|\bigcup_{\Delta \in \Delta ^{\sharp}(h) } 
\mathbb{N} _\Delta \cap [N] \right|}{N} \geq 1-\epsilon,
\end{equation} so  large that we have 
\begin{equation}  \label{e:DeltaUniform}
\max_{0<h\leq H} 
\max_{\Delta \in \Delta ^{\sharp}(h)} 
\Bigl\lVert 
\Expectation_{\mathbb{N}_\Delta \cap [N]} 
\prod _{s=2}^t 
T ^{(s-1)k_n} (f_{s} T ^{s\Delta } f_s) 
- \prod _{s=2}^t \mathbb{E}_X f_{s} T ^{s\Delta } f_s
 \Bigr\rVert_2 < \epsilon . 
\end{equation}

With this choice of $\epsilon >0$, and integers  $H$ and $N$ fixed, we apply the van der Corput Lemma \ref{vdC} 
to verify that 
\begin{equation}
\Bigl\lVert \Expectation_N 
\prod _{s=1}^t T ^{k_n } f_s \Bigr\rVert_2 \ll \epsilon . 
\end{equation}
It remains to show that 
\begin{equation}
\mathbb{E} _N 
\mathbb{E} _H 
\Bigl\langle  \prod _{s=1}^t  T^{s k_{n+h}}f_s  , 
\prod _{s=1}^t T^{s k_n}f_s 
\Bigr\rangle \ll \epsilon . 
\end{equation}

Observe that  
\begin{align}
\Bigl\langle  \prod _{s=1}^t & T^{s k_{n+h}}f_s  , 
\prod _{s=1}^t T^{s k_n}f_s 
\Bigr\rangle 
\\ 
& = 
\Bigl\langle 
f_1T^{(k_{n+h} - k_n)}f_1  , 
\prod _{s=2}^{t} 
T^{sk_{n+h} - k_n } f_s \cdot   
T^{(s-1) k_n}f_s 
\Bigr\rangle 
\\  \label{e:2inner}
& = \Bigl\langle f_1 \cdot  T^{(k_{n+h}-k_n)}f_1, 
 \prod _{s=2}^{t} T^{(s-1) k_n} 
 (f_s \cdot T^{s(k_{n+h} - k_n )} f_s)    \Bigr\rangle . 
\end{align}
Above, we will hold the difference $ k_{n+h} - k_n$ constant. Fix $0<h<H$, and fix a choice of $\Delta \in \Delta ^{\sharp}(h)$. Then consider 
\begin{equation}
\Expectation_{\mathbb N_\Delta 
\cap [N]}
\Bigl\langle f_1 \cdot  T^{\Delta }f_1, 
 \prod _{s=2}^{t} T^{(s-1) k_n} 
 (f_s \cdot T^{s \Delta } f_s) \Bigr\rangle
\end{equation}
The right hand side of the inner product  is of the form that \eqref{e:DeltaUniform} applies to. 
The expectation over the right hand side converges to 
a product of integrals. 
Therefore, with an $\epsilon $ error, the expression to estimate is the $t$-fold product of integrals 
\begin{equation} \label{e:ExpectationsDelta}
\Expectation_H \sum_{\Delta ^{\ast}(h)} 
\frac {\lvert \mathbb N_\Delta \cap [N] \rvert}{N} 
\prod _{s=1}^t \Expectation_X f_s \cdot T^{s \Delta } f_s  
+O(\epsilon ). 
\end{equation}
We enlarged the expectation to $\Delta ^{\ast}(h)$ above, and thereby increased the estimate by at most $C\epsilon $.
Note that the expectation is over values of $\Delta $ 
is reweighted, according to the density of the set of integers $\mathbb{N} _\Delta $.  

The product of the integrals is recast as one over the product system $\mathsf{X}^t$ defined in  \eqref{e:Xpower}. 
We have 
\begin{align}
\eqref{e:ExpectationsDelta} +O(\epsilon )
&= 
\Expectation_H \sum_{\Delta ^{\ast}(h)} 
\frac {|\mathbb N_\Delta \cap [N]|}{N}  
\langle F, S^\Delta F \rangle_{X^t} 
\\ 
& =  
\Expectation_N \Expectation_H 
\langle S^{k_{n+h}} F, S^{k_n} F \rangle_{X^t} . 
\end{align}
Here, recall that for a $\Delta \in \Delta ^{\ast}(h)$, 
it arises as a difference $k _{n+h}-k_n$. 
We can then use the measure preserving property of $S$ to 
redistribute the powers of $S$. 
But, by choice of $H$, our uniform ergodic theorem estimate  \eqref{e:Huniform} holds. 
The left side of the inner product is at most $\epsilon $ for all choices of $N$.  This completes our poof. 

\end{proof}

We now turn to the weakly mixing extension case.

\begin{lemma}
If $\mathcal D$ is a  factor  
of $(X, \mathcal A, \mathcal T, m)$, with weak mixing extension $\mathcal D \subset \mathcal E$, then the following holds. 
For  functions $f_1 ,\ldots, f_t\in L^2(\mathcal E\mid \mathcal D)$,  we have 
\begin{equation}
  \lim_N 
  \biggl \lVert  \Expectation_{N}  \prod _{s=1}^t 
  T^{k_n}f_s   
  -  \prod _{s=1}^t 
  T^{k_n}\mathbb{E} (f_s \mid \mathcal D)  \biggr\rVert  ^2  _{\mathcal E\mid \mathcal D} =0 . 
  \end{equation}
\end{lemma}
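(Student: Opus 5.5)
The plan is to follow the proof of the preceding Theorem step by step, replacing every inner product by the conditional inner product $\langle\cdot,\cdot\rangle_{\mathcal E\mid\mathcal D}$ and every $L^2(X)$ norm by $\lVert\cdot\rVert_{\mathcal E\mid\mathcal D}$. I read the statement with $T^{sk_n}f_s$ in both products, as in the absolute case. The argument is an induction on $t$, running over all weak mixing extensions at once. This matters because when we restrict to a progression $n\in a\delta^i+\mathbb N\delta^i$, Lemma \ref{l:Diff}(ii) turns $k_n$ into $b^ik_a+k_c$. We therefore need the hypothesis for the extension $\mathcal D\subset\mathcal E$ under $T^{b^i}$, and I will check that this is again weak mixing. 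The check: for $f$ with $\mathbb E(f\mid\mathcal D)=0$, the Ces\`aro average of $\lvert\langle T^{b^in}f,f\rangle_{\mathcal E\mid\mathcal D}\rvert$ over $n$ is dominated by $b^i$ times the full average, which tends to $0$.

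For the base case $t=1$, Theorem \ref{CantorErgodic} gives that $\Expectation_N T^{k_n}f$ converges to the projection of $f$ onto the rational-spectrum eigenfunctions. In a weak mixing extension these eigenfunctions are $\mathcal D$-measurable, so the limit is the corresponding average of $\mathbb E(f\mid\mathcal D)$.

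For the inductive step, I would write $f_s=\mathbb E(f_s\mid\mathcal D)+\tilde f_s$ and expand the product into $2^t$ terms. The all-$\mathcal D$ term is the main term. For every other term, some $\tilde f_{s_0}$ is present, and it suffices to show the average tends to $0$ in the conditional norm whenever one factor has zero conditional mean. By the induction hypothesis we may in fact reduce to the case where all $\tilde f_s$ are conditionally mean zero, exactly as in the absolute proof. I would then apply van der Corput (Lemma \ref{vdC}) pointwise in the $\mathcal D$-fibers, in its Hilbert-module form. This leaves the task of bounding
\[
\Expectation_H\Expectation_N\Bigl\langle f_1\,T^{\Delta}f_1,\ \prod_{s=2}^t T^{(s-1)k_n}\bigl(f_sT^{s\Delta}f_s\bigr)\Bigr\rangle_{\mathcal E\mid\mathcal D},\qquad \Delta=k_{n+h}-k_n .
\]
Splitting $n$ according to the sets $\mathbb N_\Delta$ of \eqref{e:NDelta}, which are unions of progressions of step $\delta^i$, the induction hypothesis applied to $T^{b^i}$ replaces each $f_sT^{s\Delta}f_s$ by its conditional expectation, up to $\epsilon$. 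The truncations \eqref{e:Sharp} and \eqref{e:DeltaUniform} are chosen just as before. After conditional Cauchy--Schwarz, the expression is bounded by the weighted average over $\Delta$ of $\lVert\mathbb E(f_1T^\Delta f_1\mid\mathcal D)\rVert$ times bounded factors.

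The last step recasts this as an average over the relative product $X\times_{\mathcal D}X$ with $S=T\times_{\mathcal D}T$ and $F=f_1\otimes\overline{f_1}$. Here $\mathbb E(f_1T^\Delta\overline{f_1}\mid\mathcal D)$ is the fiber-wise $\langle F,S^\Delta F\rangle$, and $\mathbb E(F\mid\mathcal D)=\lvert\mathbb E(f_1\mid\mathcal D)\rvert^2=0$. Undoing the reweighting over $\Delta$, as in the absolute proof, turns this into $\Expectation_N\Expectation_H\langle S^{k_{n+h}}F,S^{k_n}F\rangle$. I would then finish with a uniform ergodic theorem along $k_n$ for $S$, whose limit is $\mathbb E(F\mid\mathcal D)=0$.

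The main obstacle is this relative uniform ergodic theorem, the analogue of \eqref{e:Huniform}. Theorem \ref{t:Progressions} is only stated for weak mixing systems, whereas here $S$ is weak mixing only relative to $\mathcal D$. My first attempt would be to prove it by re-running the argument of \cite{firstpaper}*{(4.6)} with the spectral measure of $F$. Because the extension is weak mixing, the relative product is ergodic relative to $\mathcal D$, so $F$ has no eigenfunction component outside $L^2(\mathcal D)$. Combined with Lemma \ref{l:irrational} and $\mathbb E(F\mid\mathcal D)=0$, this should give uniform convergence of the Cantor averages to zero. A secondary nuisance is that the conditional norms must be controlled in $L^\infty(\mathcal D)$, or at least in $L^2(\mathcal D)$. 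I expect to state everything in $L^2(\mathcal D)$ and use boundedness of the $f_s$.
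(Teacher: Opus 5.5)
Your skeleton matches the paper's: induction over all weak mixing extensions, passage to $T^{b^i}$ on the progressions making up $\mathbb N_\Delta$, van der Corput, and freezing $\Delta=k_{n+h}-k_n$. Your final step, however, is genuinely different.

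The paper keeps all $t$ factors. It sets $F=f_1\otimes\cdots\otimes f_t$ on $\mathsf X^t$ with $S=T\otimes T^2\otimes\cdots\otimes T^t$, asserts that $\mathcal E^t$ is a weak mixing extension of $\mathcal D^t$, and then uses a relative uniform ergodic theorem that it does not prove. Its identification of $\prod_s\mathbb E(f_s\,T^{s\Delta}f_s\mid\mathcal D)$, a function on $X$, with $\langle F,S^\Delta F\rangle_{\mathcal E^t\mid\mathcal D^t}$, a function on $X^t$, is only formal.

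You instead discard all but one factor by Cauchy--Schwarz and pass to the relative product $X\times_{\mathcal D}X$ (Furstenberg's device). There the identity is honest: $\langle F,S^\Delta F\rangle_{L^2(X\times_{\mathcal D}X)}=\lVert\mathbb E(f_1\overline{T^\Delta f_1}\mid\mathcal D)\rVert_2^2$. This is the squared modulus, not $\mathbb E(f_1T^\Delta\overline{f_1}\mid\mathcal D)$ itself, so you need one extra Cauchy--Schwarz over the averaging in $(h,n)$.

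This route buys a lot. The obstacle you flag becomes an absolute statement about a single vector: you only need the spectral measure $\sigma_F$ of $F$ under $S$ to have no rational atoms. Relative ergodicity of $S$ alone only removes the atom at $0$, but
\[
\sigma_F(\{p/q\})\le\lim_N\Expectation_{n\le N}\langle S^{qn}F,F\rangle=\lim_N\Expectation_{n\le N}\bigl\lVert\mathbb E(\overline{f_1}\,T^{qn}f_1\mid\mathcal D)\bigr\rVert_2^2=0,
\]
and this is exactly relative weak mixing of $T^q$, i.e.\ your own Ces\`aro check.

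Uniformity in the window position does not come from Lemma \ref{l:irrational}, which only controls initial segments. It comes from Lemma \ref{l:Diff}(ii). Over an aligned block of length $\lvert D\rvert^i$, the exponential sum factors as $e^{2\pi i b^ik_a\theta}\prod_{j<i}\Expectation_{d\in D}e^{2\pi i d b^j\theta}$. This product tends to $0$ unless $\gcd(D)\,\theta\in\mathbb Z[1/b]$. That set is countable and $\sigma_F$-null, so dominated convergence finishes.

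One step fails as written: the reduction to all $\tilde f_s$ being conditionally mean zero ``exactly as in the absolute proof''. In the absolute case, the other terms of the $2^t$ expansion contain constants that factor out, leaving at most $t-1$ functions. Relative to $\mathcal D$, the factors $T^{sk_n}\mathbb E(f_s\mid\mathcal D)$ still move with $n$. So a mixed term such as $\Expectation_N T^{k_n}\mathbb E(f_1\mid\mathcal D)\cdot T^{2k_n}\tilde f_2$ is not reached by the induction hypothesis. You can only reduce, as the paper does, to the case that some $f_{s_0}$ has $\mathbb E(f_{s_0}\mid\mathcal D)=0$, whereas your Cauchy--Schwarz step uses $s_0=1$.

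The repair stays inside your framework. After the induction hypothesis, the integrand is $f_1\overline{T^\Delta f_1}\prod_{s\ge2}T^{(s-1)k_n}\mathbb E(f_s\overline{T^{s\Delta}f_s}\mid\mathcal D)$.
\begin{itemize}
\item For $s_0\ge 2$, bound it by the $L^1$ norm of the $s_0$-th factor; the shift $T^{(s_0-1)k_n}$ preserves that norm. This gives $\lVert\mathbb E(f_{s_0}\overline{T^{s_0\Delta}f_{s_0}}\mid\mathcal D)\rVert_2$.
\item For $s_0=1$, condition on $\mathcal D$ first, as you do.
\item Then run the relative-product argument with $F=f_{s_0}\otimes\overline{f_{s_0}}$ and $S=T^{s_0}\times T^{s_0}$.
\end{itemize}

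Finally, run van der Corput in $L^2(X)$ rather than fibrewise. The conditional inner product $\langle u_{n+h},u_n\rangle_{\mathcal E\mid\mathcal D}$ is $T^{k_n}$ applied to the unshifted expression you display, and the shift disappears only after integration. That is all the lemma needs, since its integrated form is $L^2(X)$ convergence.
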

Notice that we relax the convergence to that of the conditional norm. 
And that this proves the second point of Lemma \ref{l:Extensions}. Namely, if $\mathcal D $ is CSZ, then so is $\mathcal E$.

\begin{proof}
This is a relativized version of the previous proof. 
We use induction. 
The case of $t=1$ requires attention.  
The averages 
\begin{equation}
\Expectation_N T ^{k_n } f 
\end{equation}
tend in $L^2(\mathcal E)$ to 
a linear combination of the projection on $f$ onto eigenfunctions of $T $ as an operator on $L^2(\mathcal E)$, in view of Theorem \ref{CantorErgodic}. 
As $\mathcal E $ is a weak-mixing extension,  every non-zero eigenfunction of $T $ in $L^2(\mathcal E)$ is $\mathcal D$
 measurable. 
Indeed, $T$, as an operator on $L^2(\mathcal E)$ acts componentwise on $L^2(\mathcal D)\oplus L^2(\mathcal D) ^{\perp}$. 
And, the definition of a weakly mixing system implies that there is no non-zero eigenfunction on $L^2(\mathcal D) ^{\perp}$. 

 We are free to assume that  $\mathbb{E} (f \mid \mathcal D)=0$. 
 And this  implies that $f $ is orthogonal to all eigenfunctions of $T$.  Thus, the limit is $0$ in the $L^2(\mathcal E) $ norm.  And hence, the base case of the induction statement holds. 

\smallskip 
We pass to the inductive step.
Assume the condition holds for $t-1\geq 1$. 
We prove the condition for $t$. 
The conclusion is trivial if each $f_s$ is  $\mathcal D$ measurable. This allows us to reduce to the case in which at least one $f_s$  
has  $\mathbb{E} (f_s \mid \mathcal D)=0$ and we show that the limit is zero. 
We will be less careful about the $\epsilon $-management, as those details are very similar to the weakly mixing case. 

 \smallskip 
We apply the van der Corput Lemma, with a choice of $\epsilon >0$. This is done with appropriately chosen integers $N, H > 1/\epsilon $.  
The van  der Corput Lemma involves terms 
\begin{align}
u_n \coloneqq 
\prod _{s=1}^t 
  T^{s k_n}f_s. 
\end{align}
And, we rewrite the inner products as 
\begin{align}
\langle u_{n+h}, u_n \rangle _{\mathcal E\mid \mathcal D}
& = 
\Bigl\langle 
\prod _{s=1}^t 
  T^{ sk_{n+h} } f_s 
 \cdot \prod _{s=1}^t  T^{sk_{n} } f_s  \Bigr\rangle _{\mathcal E\mid \mathcal D}
 \\ 
 & = 
\Bigl\langle 
T ^{k _{n+h}} f_1 \cdot 
T ^{k_n} f_1 , 
\prod _{s=2}^t 
 T^{s k_{n} }(f_s \cdot T^{s (k_{n+h}-k_n) } f_s ) \Bigr\rangle 
 _{\mathcal E\mid \mathcal D}
 \\ 
  & = 
\Bigl\langle 
f_1 \cdot T ^{1 (k _{n+h} - k_n)} f_1
 , 
\prod _{s=2}^t 
 T^{(s-1) k_{n} }(f_s \cdot T^{s (k_{n+h}-k_n) } f_s ) \Bigr\rangle 
 _{\mathcal E\mid \mathcal D}.
\end{align}
The last term is our focus. 
We use the notation $\mathbb{N} _{\Delta }$ from \eqref{e:NDelta}, as well as the collections $\Delta ^{\ast}(h)$, 
which is all possible values of $k _{n+h}-k_n$, as $n$ varies, and $\Delta ^{\sharp}(h)$ as in \eqref{e:Sharp}. 

Then, the induction hypothesis applies to show that 
we can arrange $H$ and $N$ so that 
\begin{equation}
\max_{0<h\leq H} \max _{\Delta \in \Delta ^{\sharp}(h)}
\biggl\lVert 
\Expectation _{[N]\cap \mathbb{N} _\Delta } 
\prod _{s=2}^t 
 T^{(s-1) k_{n} }(f_s \cdot T^{s \Delta  } f_s )
 - 
 \prod _{s=2}^t 
 \mathbb{E} (f_s \cdot T^{s \Delta  } f_s \mid \mathcal D ) 
 \biggr\rVert _{\mathcal E\mid \mathcal D} < \epsilon .  
\end{equation}
Therefore, we are left with estimating 
\begin{equation}
\label{e:wmInduction}
\Expectation 
_{H} \sum_{\Delta ^{\ast}(h)} 
\frac {\lvert \mathbb N_\Delta \cap [N] \rvert}{N} 
\prod _{s=1}^t 
 \mathbb{E} (f_s \cdot T^{s (k_{n+h}-k_n) } f_s \mid \mathcal D ) . 
\end{equation}

This last term needs to be interpreted as an inner product on a larger system. 
The system is $\mathsf X^t$, as in \eqref{e:Xpower}. In particular, we use the transformation $S= T\otimes \cdots \otimes T^t$ on this system.   
It has factors $\mathcal D ^{t} \subset \mathcal E ^{t}$. 
The system  
$\mathcal E ^{t}$ is a weakly mixing extension of $\mathcal D ^{t}$.  To verify this, observe that for $g_1 ,\ldots, g_t \in L^2(\mathcal E \mid \mathcal D)$, and 
$h_1 ,\ldots, h_t \in L^2 (\mathcal E \mid \mathcal D)$,   with at least one $g_s$ satisfying $\mathbb{E} (g_s \mid \mathcal D )=0$, 
we have for $G= g_1 \otimes \cdots \otimes g_t$, and similarly for $H$, 
\begin{align}
\mathbb{E} _N \lvert \langle S^n G, H \rangle _{\mathcal E^t\mid \mathcal D^t}\rvert 
\leq 
\mathbb{E} \prod_{s=1}^t\lvert \langle T^{sn} g_s, h_s \rangle
_{\mathcal E\mid \mathcal D}
\rvert 
\to 0. 
\end{align}
The same conclusion holds for linear combinations of $G $ and $H$, hence our claim follows.  

In particular, 
setting $F = f_1 \otimes \cdots \otimes f_t$, we have 
\begin{align}
\eqref{e:wmInduction} 
&= 
\Expectation 
_{H} \sum _{\Delta ^{\ast}(h)} 
\frac {\lvert \mathbb N_\Delta \cap [N] \rvert}{N} 
\langle F , S ^{\Delta } F \rangle _{\mathcal E^t\mid \mathcal D ^{t}} 
\\ 
& =
\Expectation_N \Expectation_H 
\langle S ^{k_{n+h}}F, S ^{k_n} F  \rangle 
_{\mathcal E^t\mid \mathcal D ^{t}}  +O(\epsilon )
\\ 
& =  
\mathbb{E} (F \mid \mathcal D ^{t}) ^2 + O(\epsilon ) 
\\ 
& = \prod _{s=1}^t \mathbb{E} (f_s\mid \mathcal D) ^2 + O(\epsilon ) = O(\epsilon ). 
\end{align}
This depends upon the $\mathcal E ^{t}$ being a weakly mixing extension of $\mathcal D ^{t}$, $H$ being sufficiently large, and one $f_s$ having conditional expectation $0$.  This completes the proof.
\end{proof}

\section{Van der Waerden's Theorem}

We let $\mathcal K=\mathcal{C}(b,D)$ be an integer Cantor set in base $b\geq 2$ and digit set $D\subset \{0,\cdots ,b-1\}$. We require that $0\in D$.\\

\begin{theorem}\label{thmCvdW} Let $(X,\mathcal{F},T)$ be a topological dynamical system, and $(U_\alpha)_{\alpha\in A}$ be an open cover of $X$. Then for every $n\geq 1$, there exists an open set $U_\alpha$ which contains an arithmetic progression 
$x,T^rx,T^{2r}x,\cdots ,T^{(n-1)r}x$ for some $x\in X$ and $r\in \mathcal K\setminus \{0\}$.
\end{theorem}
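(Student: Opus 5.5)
The plan is to deduce the statement from the IP van der Waerden theorem of Furstenberg--Weiss. The key observation is that, because $0\in D$ and $\lvert D\rvert\geq 2$, the Cantor set $\mathcal K$ contains an IP-set.

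\emph{The IP-set inside $\mathcal K$.} Fix a nonzero digit $d\in D$ and set $r_j = d\,b^{j}$ for $j\geq 0$. For any finite nonempty $F\subset\mathbb N$, the sum $\sum_{j\in F} d\,b^j$ has base-$b$ digits equal to $d$ in the positions of $F$ and equal to $0$ elsewhere. Both $0$ and $d$ lie in $D$, so this sum belongs to $\mathcal K\setminus\{0\}$. No carries occur, since each position receives at most one copy of $d<b$. Hence
\begin{equation}
\mathrm{FS}(r_j) := \Bigl\{\sum_{j\in F} r_j \colon F\subset \mathbb N \text{ finite, nonempty}\Bigr\} \subset \mathcal K\setminus\{0\}.
\end{equation}
It therefore suffices to find the progression with step $r$ in this IP-set.

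\emph{Reduction to the IP recurrence theorem.} This is the topological counterpart of the IP Szemer\'edi theorem recalled in the introduction. I would first reduce to a minimal subsystem: by Zorn's lemma, pick a minimal closed $T$-invariant $Y\subset X$. If $X$ is not compact, one first passes to a compact subsystem; I take $X$ compact metric, as is implicit in the statement. The sets $U_\alpha\cap Y$ cover $Y$, so by compactness finitely many of them, $U_1,\dots,U_m$, cover $Y$. Then I would invoke the Furstenberg--Weiss IP multiple recurrence theorem: for a minimal system $Y$, a nonempty open $U\subset Y$, a commuting family $T, T^2,\dots,T^{n-1}$, and any IP-set $\mathrm{FS}(r_j)$, there exist $r\in\mathrm{FS}(r_j)$ and $x\in U$ with $T^{ir}x\in U$ for $0\leq i<n$. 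Applying this with $U=U_1\cap Y$ (nonempty after discarding empty members) gives the conclusion.

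\emph{Self-contained alternative.} If one prefers not to cite Furstenberg--Weiss, the standard inductive proof via homogeneous sets and IP-color focusing runs verbatim over any IP-set. I would carry it out as follows.
\begin{enumerate}
\item Prove by induction on $n$ the statement: for every IP-set and every finite open cover of a minimal system, some cover member contains a progression of length $n$ with step in the IP-set. By minimality this upgrades to the statement that every nonempty open set contains such a progression.
\item For the inductive step, use the color-focusing construction. Starting from a point, build points $x_1, x_2,\dots$ and steps $s_1, s_2,\dots$ drawn from successively deeper tails $\mathrm{FS}(r_j)_{j\geq M_k}$, so that each new step is a finite sum disjoint from the earlier ones. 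Sums of consecutive blocks $s_{i+1}+\dots+s_k$ then stay in the IP-set.
\item Pigeonhole over the $m$ cover elements yields the length-$n$ progression.
\end{enumerate}

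The main obstacle is exactly the closure of the IP-set under disjoint sums, which the color-focusing step needs. This closure is what the digit construction above provides: disjoint sets of digit positions add without carries. Beyond that step, I expect the argument to be routine.
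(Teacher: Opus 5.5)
Your proof is correct, and it uses a different device from the paper's to keep block sums of steps inside $\mathcal K$.

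\emph{The paper's route.} The paper also reduces to a minimal subsystem and runs the color-focusing induction, but it stays inside $\mathcal K$. It applies the length-$(k-1)$ case to a neighborhood $V$ of $x_{J-1}$ to get an arbitrary step $r\in\mathcal K$. It then splits $r=r_0+b^hr_1$ with $r_0<b^h$ and keeps only $k_J=b^hr_1$, whose digits sit above those of $k_1+\cdots+k_{J-1}$.

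\emph{Your route.} You embed the IP set $\mathrm{FS}(db^j)$ in $\mathcal K\setminus\{0\}$ and prove the statement for every IP set. The inductive hypothesis can then be applied to a tail $\mathrm{FS}(db^j)_{j\geq M}$, so the new step has disjoint digit support from the outset.

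\emph{Why the difference matters.} In the paper's version, discarding $r_0$ after the fact leaves the points $T^{i(k_{j+1}+\cdots+k_{J-1})-(i-1)r_0}\bigl(T^{(i-1)r}y\bigr)$. These lie near $T^{i(k_{j+1}+\cdots+k_{J-1})-(i-1)r_0}x_{J-1}$, not near $T^{i(k_{j+1}+\cdots+k_{J-1})}x_{J-1}$. So for $i\geq 2$, shrinking $V$ does not place them in $U_{\alpha_j}$. In addition, the case $r_1=0$ produces a zero step. Your tail device forces the step into $b^h\mathcal K\setminus\{0\}$, so $r_0=0$ and the step is nonzero. That is exactly what this step of the color-focusing needs, and your induction over all IP sets supplies it automatically.

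\emph{What each buys.} The paper's route, once repaired this way, would give an argument phrased intrinsically in terms of $\mathcal K$. Yours is shorter, since it is either a citation of the IP van der Waerden theorem or a verbatim standard proof. It also gives a slightly stronger conclusion: the step can be taken in the thin subset $\mathrm{FS}(db^j)$, which uses only the digits $0$ and $d$.
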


This result is a corollary to work of Bergelson and Leibman \cite{MR1325795}*{Theorem 0.13}. We include a proof here for the sake of completeness.   
Indeed, we follow the familiar lines of the proof of the van der Waerden's Theorem.

The important special case is when the system $(X,\mathcal{F},T)$  is \emph{minimal}, that is if $F\subset X$ is an invariant set, namely $TF\subset F$, then $F=X$.

\begin{theorem}\label{thm:2}
    Let $(X,\mathcal{F},T)$ be a minimal topological dynamical system, $U\subset X$ a nonempty open set, and $n\geq 1$. Then $U$ contains an arithmetic progression  $y,T^ry,\cdots ,T^{(n-1)r}y$ for some $y\in X$ and $r\in \mathcal K\setminus \{0\}$.
\end{theorem}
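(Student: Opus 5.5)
The plan is to run the Furstenberg–Weiss ``focusing points'' proof of topological van der Waerden, keeping every step inside $\mathcal K$. The only arithmetic input I need is that $\mathcal K$ contains many finite sums of its own elements. For an integer $M\geq 0$, let $\mathcal K_M$ be the set of nonzero $r\in \mathcal K$ whose base $b$ digits vanish in positions $<M$. If $r_1\in \mathcal K_{M_1}$ has all its nonzero digits in positions $<M_2$, and $r_2\in \mathcal K_{M_2}$, then $r_1$ and $r_2$ have disjoint digit supports. Since $0\in D$, every digit of $r_1+r_2$ lies in $D$, so $r_1+r_2\in \mathcal K_{M_1}$. This is the same block structure as in Lemma \ref{l:Diff}(ii). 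Iterating, any sum $r_{i+1}+\cdots+r_j$ of steps chosen from successively higher, disjoint digit blocks lies in $\mathcal K\setminus\{0\}$. Thus $\mathcal K$ contains an IP set, which is all the combinatorics requires.

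First I use minimality and compactness. Every orbit is dense, so $X=\bigcup_{j\geq 0}T^{-j}U$, and compactness gives $X=\bigcup_{j=0}^{m}T^{-j}U$ for some finite $m$. Next I prove by induction on $n$ the stronger, uniform statement
\[
P(n):\ \text{for every } \epsilon>0 \text{ and } M\ge 0 \text{ there are } x\in X \text{ and } r\in\mathcal K_M \text{ with } d(T^{jr}x,x)<\epsilon \text{ for } 0\le j<n.
\]
The case $n=1$ is trivial. Once $P(n)$ is known, the theorem follows by applying it to progressions of length $n$ inside the sets $T^{-j}U$ of the finite cover, as in the classical argument. The cleanest route is to prove the ``colored'' version directly, as follows.

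For the inductive step, fix $\epsilon$ and $M$. Using $P(n-1)$, minimality, and uniform continuity of $T,\dots,T^{(n-1)r}$ for the finitely many steps already chosen, I build points $y_0,y_1,\dots,y_m$ and steps $r_1,r_2,\dots,r_m$ recursively, where $m$ is the number of sets in the cover. Each $r_i$ is taken in $\mathcal K_{M_i}$, and $M_{i+1}$ exceeds the top digit position of $r_i$. The recursion is arranged so that for all $i<j$ the point $y_j$ is an approximate focus of length $n-1$ toward $y_i$:
\[
T^{\,k(r_{i+1}+\cdots+r_j)}y_j \ \text{is } \epsilon\text{-close to } y_i, \qquad 1\le k\le n-1 .
\]
Each new $y_{j+1}$ comes from applying $P(n-1)$ near a preimage of $y_j$, with $M$ replaced by $M_{j+1}$. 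Minimality gives that preimage, and the recursive uniform continuity estimates keep the earlier approximations valid with a shrinking sequence of $\epsilon$'s.

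Finally I pigeonhole. Among $y_0,\dots,y_m$, two of them, say $y_i$ and $y_j$ with $i<j$, lie in a common set $T^{-q}U$ of the cover, up to the $\epsilon$-slack. Then $r=r_{i+1}+\cdots+r_j$ lies in $\mathcal K\setminus\{0\}$ by the disjoint-digit observation above. Applying $T^{q}$ yields a point $y$ with $y,T^ry,\dots,T^{(n-1)r}y$ all in $U$. To make the progression land exactly in the open set $U$ rather than merely $\epsilon$-close, I replace $U$ by a smaller open set whose closure is contained in $U$, and take $\epsilon$ below the distance from that closure to the complement of $U$.

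The main obstacle is the bookkeeping in the recursion. The $\epsilon$'s must be chosen in advance, decreasing, using uniform continuity of only finitely many iterates at each stage. At the same time, the digit-block constraint $r_{j+1}\in\mathcal K_{M_{j+1}}$ must stay compatible with the induction hypothesis. That compatibility is exactly why $P(n)$ is stated with the parameter $M$: the induction hypothesis must itself supply steps with arbitrarily high digit support. So I expect to have to verify carefully that $P(n)$, and not just existence of some step, is what the construction outputs.
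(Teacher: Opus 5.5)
Your proposal is correct in outline, but at the focusing step it takes a different route from the paper. Both arguments run the color-focusing induction.

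The paper applies the length-$(k-1)$ case for $\mathcal K$ itself and writes the resulting step as $r=r_0+b^h r_1$ with $r_0<b^h$. It keeps only $k_J=b^h r_1$ and tries to absorb the leftover shift $T^{-(i-1)r_0}$ by continuity, using that $r_0$ takes finitely many values.

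You instead make the induction hypothesis uniform over the tails $\mathcal K_M=b^M\mathcal K\setminus\{0\}$; this is what $P(n)$ encodes, since $0\in D$. The new step $r_{j+1}$ therefore already sits in a digit block above all earlier steps, and nothing has to be discarded.

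This is more than tidiness. Continuity only places $T^{i(k_{j+1}+\cdots+k_{J-1})}T^{-(i-1)r_0}V$ near the point $T^{i(k_{j+1}+\cdots+k_{J-1})-(i-1)r_0}x_{J-1}$. For $r_0\neq 0$ and $i\geq 2$, the inductive data say nothing about whether that point lies in $U_{\alpha_j}$. The case $r_1=0$, which gives $k_J=0$, is not excluded either. Your parameter $M$ removes both problems, and it is the standard device for IP versions of van der Waerden.

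The price is the one you flag: the step must return the $M$-uniform statement. That only requires starting with $M_1\geq M$, so that $r_{i+1}+\cdots+r_j\in\mathcal K_{M_{i+1}}\subset\mathcal K_M$.

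A secondary difference is how focusing is measured. You use metric closeness (Furstenberg--Weiss style), whereas the paper uses membership in the sets of the cover. Your choice costs summable tolerances and a margin at the end.

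A few details to tighten:
\begin{enumerate}
\item The final margin must be pushed through the moduli of continuity of $T^q$, $0\leq q\leq m$. Comparing $\epsilon$ directly with $\mathrm{dist}(\overline{U'},X\setminus U)$ is not enough.
\item The inductive step uses the localized form of $P(n-1)$, which you get from $P(n-1)$ by the homogeneity argument you describe. It is applied in a neighborhood $V$ of $y_j$ fixed before $r_{j+1}$ is known. Then $y_{j+1}$ is a $T^{r_{j+1}}$-preimage of the point so obtained, not a point near a preimage of $y_j$.
\item The pigeonhole needs one more point than there are sets in the cover.
\end{enumerate}
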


\begin{proof}[Proof that Theorem \ref{thm:2} implies Theorem \ref{thmCvdW}]
This is a standard argument.   
Given $(X,\mathcal{F},T)$, a  topological dynamical system, let $\mathbf F$ 
to the collection of closed $F\subset X$, which are $T$ invariant, thus $TF\subset F$. 
This set is ordered by inclusion, hence by Zorn's Lemma, must have a minimal element. 
If the only minimal element is the empty set, then the system is minimal, and we have nothing to do.  Otherwise, we fix nonempty and minimal $F\in \mathcal F$.  
Then, for any open cover $(U_\alpha)_{\alpha\in A}$ of $X$,  there is an $\alpha$ so that  $U_\alpha \cap F$ is non-empty. And relatively open. 
Moreover, $(F, \mathcal F \cap F, T\vert_F)$ is a minimal system. Hence 
$U_\alpha \cap F$ must contain an  arithmetic progression of the desired type, so 
Theorem \ref{thmCvdW} follows.

\begin{comment}

The sets $\{T^n U \colon n\in \mathbb Z\}$ 
are open, and by minimality, cover $X$. And, $X$ is compact, so we can choose finite subcover  $(U_\alpha)$ to be $(T^{n_i}U)_{1\leq i\leq t}$. Then assuming Theorem \ref{thmCvdW}, there exists some $1\leq i\leq t$ such that $T^{n_i}U$ contains an $AP$ $x,T^{r}x,\cdots ,T^{(k-1)r}x$ for some $x\in X$ and $r\in \mathcal K\setminus\{0\}$. So, \begin{align*}
    U\supset \{T^{-n_i}x,T^{r-n_i}x,\cdots ,T^{(k-1)r-n_i}x\}.
\end{align*} Take $y=T^{-n_i}x$, then the result follows.
\end{comment}
\end{proof}

\begin{proof}[Proof of Theorem \ref{thm:2}] We do this by induction on the length $k$ of the arithmetic progression.  The case $k=1$ is trivial, so we take $k\geq 2$ and assume that the Theorem has been proved for $k-1$. Note that this also implies that Theorem \ref{thm:2} holds for $k-1$.

Fix a minimal system $(X,\mathcal{F},T)$ and an open cover $(U_\alpha)_{\alpha\in A}$, which we can take to be finite. We must show that one of the $U_\alpha$ contains an arithmetic progression $x,T^{r'}x,\cdots ,T^{(\ell-1)r'}x$ of length $\ell $, with $r'\in \mathcal K\setminus \{0\}$.

We now require the following Lemma, sometimes referred to as the `color focusing' Lemma. 

\begin{lemma}\label{lem:1}
   Let the notation and assumptions be as above. Then for any $J\geq 0$ there exists a sequence $x_0,\cdots ,x_J$ of points in $X$, a sequence $U_{\alpha_0},\cdots ,U_{\alpha_J}$ of sets in the open cover (not necessarily distinct), and a sequence $k_1,\cdots ,k_{J}$ of elements of $\mathcal K$ such that \begin{enumerate}
       \item[(1)] $k_{j+1}+k_{j+2}+\cdots +k_{j'}\in \mathcal K\setminus \{0\}$ for all $0\leq j\leq j'\leq J$, and 
       \item[(2)]
       For all $0\leq j\leq j'\leq J$ and $1\leq i\leq \ell-1$
       \begin{align*}
T^{i(k_{j+1}+k_{j+2}+\cdots +k_{j'})}x_{j'}\in U_{\alpha_j}
    \end{align*} 
   \end{enumerate}

\end{lemma}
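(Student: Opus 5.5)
We prove the color focusing lemma by induction on $J$, following the classical van der Waerden argument, and building the Cantor steps $k_1,\dots,k_J$ with disjoint base-$b$ digit supports. The key structural fact is this. If $k,k'\in\mathcal K$ and every nonzero digit of $k'$ sits in a position strictly above the top nonzero digit of $k$, then $k+k'\in\mathcal K$: there are no carries, and every digit of the sum lies in $D$ (this uses $0\in D$). Hence if each new step is a multiple of $b^{L}$ with $L$ exceeding the length of all earlier sums, every consecutive block sum $k_{j+1}+\cdots+k_{j'}$ lies in $\mathcal K$. It is nonzero as soon as $j<j'$, since each $k_j\neq 0$. (For $j=j'$ the sum is empty; condition (2) then just says $x_j\in U_{\alpha_j}$, and (1) should be read for $j<j'$.) Note also that $b^L\mathcal K\subset\mathcal K$, again because $0\in D$. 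So applying the induction hypothesis (Theorem \ref{thm:2} for length $\ell-1$) to the minimal system $(X,T^{b^L})$ produces a step $r\in\mathcal K$, and the actual step $b^L r$ lies in $\mathcal K$ with digits above position $L$. One subtlety: $T^{b^L}$ need not be minimal. We handle this by passing to a minimal subsystem of $(X,T^{b^L})$, as in the proof that Theorem \ref{thm:2} implies Theorem \ref{thmCvdW}. The open set is still met, because $X$ is a finite union of the sets $T^{i}$ applied to any $T^{b^L}$-minimal set, so some translate of $U$ meets it; we then pull back by $T^{-i}$.

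\textbf{Base case $J=0$.} Pick any $x_0$ and any $U_{\alpha_0}\ni x_0$; condition (1) is vacuous.

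\textbf{Inductive step.} Assume $x_0,\dots,x_J$, $U_{\alpha_0},\dots,U_{\alpha_J}$ and $k_1,\dots,k_J$ are built. Let $L$ be larger than the number of base-$b$ digits of $k_1+\cdots+k_J$. By continuity, condition (2) is an open condition in $x_J$. So there is an open neighbourhood $V$ of $x_J$ such that $T^{i(k_{j+1}+\cdots+k_J)}y\in U_{\alpha_j}$ for all $y\in V$, all $0\le j\le J$ and all $1\le i\le \ell-1$; for $j=J$ this also gives $y\in U_{\alpha_J}$. By minimality of $T$, the translates $T^{-m}V$ cover $X$, so finitely many suffice, say $X=\bigcup_{m\le M}T^{-m}V$.

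Now apply the induction hypothesis for length $\ell-1$ in the system $T^{b^L}$, to the open set $T^{-m}V$ for some $m$, following the classical argument. This yields $z$ and $r$ such that $T^{i b^L r}z\in T^{-m}V$ for $1\le i\le \ell-1$. Put $k_{J+1}=b^L r$ and $x_{J+1}=T^{m}z$ after a suitable adjustment. Concretely, set $y=T^{-k_{J+1}}T^m z$, so that $T^{(i-1)k_{J+1}}\cdot T^{k_{J+1}}y$ lands in $V$ for $i-1=0,\dots,\ell-2$. Combining this with the defining property of $V$ gives $T^{i(k_{j+1}+\cdots+k_{J+1})}y\in U_{\alpha_j}$ for all $j\le J$, and we set $x_{J+1}$ to be this $y$. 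Finally, choose $U_{\alpha_{J+1}}$ to be any element of the cover containing $x_{J+1}$. Condition (1) holds for all new block sums, because $k_{J+1}$ has digits only above position $L$ and the digit-concatenation fact applies.

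We expect the main obstacle to be the index bookkeeping in this last step. The length-$(\ell-1)$ progression must be aligned with $V$ so that all $\ell-1$ multiples for the new block sums land correctly; in the standard proof this is done by applying the hypothesis to $T^{-k}V$ and shifting the base point. A second, smaller point is checking that minimality survives the passage to $T^{b^L}$ via a minimal subsystem.
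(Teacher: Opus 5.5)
Your proof is correct, and it departs from the paper's at the one step that matters: how the new step is forced to have its $b$-adic digits above those of the earlier block sums. The paper applies the length-$(\ell-1)$ statement to $T$ itself and obtains $r\in\mathcal K$. It then splits $r=r_0+b^hr_1$ with $r_0<b^h$, keeps only $k_J=b^hr_1$, and sets $x_J=T^{-b^hr_1}y$. Finally it tries to absorb the leftover twist $T^{-(i-1)r_0}$ by shrinking $V$, since $r_0$ takes only finitely many values. You instead apply the length-$(\ell-1)$ statement to $T^{b^L}$, so the step is $b^Lr$ with $r\in\mathcal K\setminus\{0\}$ from the outset, and no remainder ever appears. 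The cost is that $T^{b^L}$ need not be minimal. You handle this correctly through a $T^{b^L}$-minimal set $Y$ with $X=\bigcup_{i<b^L}T^iY$; equivalently, apply Theorem \ref{thmCvdW} for length $\ell-1$ to $(X,T^{b^L})$ with the finite cover by translates $T^{-m}V$.

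Your route buys more than tidiness. In the paper's version, shrinking $V$ only keeps $T^{i(k_{j+1}+\cdots+k_{J-1})}T^{-(i-1)r_0}V$ close to the point $T^{i(k_{j+1}+\cdots+k_{J-1})}T^{-(i-1)r_0}x_{J-1}$. Nothing in the inductive hypothesis places that point in $U_{\alpha_j}$ when $r_0\neq 0$. Also, $r_1$ may vanish, giving $k_J=0$ and breaking (1). Your choice $k_{J+1}=b^Lr$, with the digit-concatenation argument using $0\in D$, avoids both problems. You are also right that (1) has to be read for $j<j'$.

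The only thing to clean up is the normalization of the progression. Either take $w,T^sw,\dots,T^{(\ell-2)s}w\in V$ with $s=k_{J+1}$ and put $x_{J+1}=T^{-s}w$, or take $T^{is}z'\in V$ for $1\le i\le \ell-1$ and put $x_{J+1}=z'$. Your write-up currently mixes the two.
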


\begin{proof}[Proof of Lemma \ref{lem:1}] We induct on $J$. The case $J=0$ is trivial. Now suppose inductively that $J\geq 1$, and that we have already constructed points $x_0,\cdots ,x_{J-1}$, and sets  $U_{\alpha_0},\cdots ,U_{\alpha_{J-1}}$, and integers $k_1,\cdots ,k_{J-1}$ with the requisite properties. Choose $h$ sufficiently large such that $k_1+\cdots +k_{J-1}<b^h$. Now, let $V$ be a suitably small neighborhood of $x_{J-1}$ (depending on all this data), to be determined later. By Theorem \ref{thm:2} for $k-1$, $V$ contains an arithmetic progression $y,T^{r}y,\cdots ,T^{(k-2)r}y$ for some $y\in X$ and $r\in \mathcal K$. We may write \begin{align*}
    r=r_0+b^{h}r_1
\end{align*} 
for some $r_0,r_1\in \mathcal K$ with $r_0\in [0,b^{h}-1]$. 
(Note here that we require $0\in \mathcal K$ in the case that $r_1$ is zero.)
If one sets $x_J:=T^{-b^{h}r_1}y=T^{-(r-r_0)}y$ and $U_{\alpha_J}$ to be an arbitrary member of the open cover that contains $x_J$, then we observe that \begin{align*}
    T^{i(k_{j+1}+k_{j+2}+\cdots +k_{J-1}+b^hr_1)}x_J&=T^{i(k_{j+1}+k_{j+2}+\cdots +k_{J-1})}(T^{(i-1)b^hr_1}y) \\ &=T^{i(k_{j+1}+k_{j+2}+\cdots +k_{J-1})}(T^{(i-1)(r-r_0)}y) \\ &= T^{i(k_{j+1}+k_{j+2}+\cdots +k_{J-1})}T^{-r_0(i-1)}(T^{(i-1)r}y) \\ &\in T^{i(k_{j+1}+k_{j+2}+\cdots +k_{J-1})}T^{-r_0(i-1)}(V)
\end{align*} for all $0\leq j<J$ and $1\leq i\leq k-1$. The important point is that we now have control over the number of transformations, because $r_0<b^h$. By continuity of $T$, we then can choose our neighborhood $V$ to be sufficiently small to ensure that \begin{align*}
    T^{i(k_{j+1}+k_{j+2}+\cdots +k_{J-1}+b^hr_1)}x_J\in U_{\alpha_J},\quad\quad \forall\  0\leq j<J,\ 1\leq i\leq k-1.
\end{align*} By choosing $k_J:=b^hr_1$, this completes the induction on $J$, to provide the lemma.
    
\end{proof}

We then apply the above lemma with $J$ equal to the number of sets in the open cover. By the pigeonhole principle, we can then find some $0\leq j< j'\leq J$ such that $U_{\alpha_j}=U_{\alpha_{j'}}$. If we set $x:=x_{j'}$ and $r':=k_{j+1}+\cdots +k_{j'}$ then we complete the induction step for $k$, to obtain Theorem 1.\\
\end{proof}

\section{Compact Systems and Extensions}

We address the compact cases of Lemma \ref{l:Extensions}, first for compact systems, and then for compact extensions. 
Throughout, let $\mathcal K = \mathcal K_{b,D}$ be a classical Cantor set, with $0\in D$. Let $\{k_n\colon n\geq 1\} $ be its elements written in increasing order.

\begin{theorem} Let $(X,\mathcal A, m,T)$ be a compact system. 
Then, for non-negative $f\in L^\infty $,  all $\epsilon >0$, and  all non-negative integers $t$, the set of integers 
$n$ such 
\begin{equation}
\int \prod _{s=0} ^{t} T ^{s k_n } f 
\;dm \geq \int f ^{t+1}\;dm - \epsilon 
\end{equation}
has positive density.

\end{theorem}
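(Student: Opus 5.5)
The plan is to use the standard compact-system argument: find many $n$ for which $T^{k_n}f$ is $L^2$-close to $f$, so that every factor in the product $\prod_s T^{sk_n}f$ is close to $f$. The required recurrence will come from the rational spectrum of a compact system, combined with Lemma \ref{l:Allq}, which says that $k_n \equiv 0 \mod q$ for a positive proportion $\pi_q(0)$ of $n$.

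\emph{Step 1 (approximation).} Assume $0\le f\le 1$. A compact system has discrete spectrum, and by the remark after the definition we may take the spectrum to be rational. So $f$ is the $L^2$ limit of finite linear combinations of eigenfunctions $\phi_j$ with eigenvalues $e^{2\pi i a_j/q_j}$. Fix $\eta>0$ and pick $g=\sum_{j\le J} c_j\phi_j$ with $\lVert f-g\rVert_2<\eta$. Let $q=\mathrm{lcm}(q_1,\ldots,q_J)$. Then $T^{m}g=g$ whenever $q\mid m$, and therefore
\begin{equation}
\lVert T^{m}f-f\rVert_2\le 2\eta \qquad \text{for all } q\mid m.
\end{equation}

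\emph{Step 2 (telescoping).} Take $n$ with $k_n\equiv 0\mod q$. Then $q\mid sk_n$ for every $0\le s\le t$, so $\lVert T^{sk_n}f-f\rVert_2\le 2\eta$ for each such $s$. Since all the functions involved are bounded by $1$, we telescope, replacing one factor at a time:
\begin{equation}
\Bigl\lvert \int \prod_{s=0}^{t} T^{sk_n}f\,dm-\int f^{t+1}\,dm\Bigr\rvert
\le \sum_{s=1}^{t}\lVert T^{sk_n}f-f\rVert_1\le 2t\eta .
\end{equation}
Choosing $\eta=\epsilon/(2t)$ gives the desired inequality for every $n$ with $k_n\equiv 0 \mod q$.

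\emph{Step 3 (density).} By Lemma \ref{l:Allq}, the set $\{n: k_n\equiv 0\mod q\}$ has density $\pi_q(0)>0$. This set is contained in the set in the statement, which therefore has positive (lower) density.

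The main obstacle is Step 1: justifying that a compact system, with the ergodicity and invertibility reductions in force, has discrete spectrum with only rational eigenvalues. Discrete spectrum is standard, since $\overline{\{T^nf\}}$ compact means $f$ lies in the span of eigenfunctions. Rational eigenvalues are asserted in the paper's setup. If irrational eigenvalues were allowed, I would instead use Lemma \ref{l:irrational}: the point $(k_n\alpha_1,\ldots,k_n\alpha_J)$ is equidistributed on a subtorus, and I would use this to get a positive-density set of $n$ on which every $e^{2\pi i k_n\alpha_j}$ is within $\eta$ of $1$. This requires a joint equidistribution statement slightly beyond the lemmas as stated, so I would first try to stay within the rational case.
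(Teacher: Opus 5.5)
Your Steps 1--3 follow the paper's argument: approximate $f$ by a $T^q$-invariant function, restrict to $n$ with $k_n\equiv 0 \bmod q$, telescope, and use $\pi_q(0)>0$. But Step 1 has a genuine gap, and the paper's proof has the same one. A compact system need not have rational spectrum. The rotation $x\mapsto x+\alpha$ on $\mathbb{R}/\mathbb{Z}$ with $\alpha\notin\mathbb{Q}$ is ergodic, invertible and compact, since translation is continuous in $L^2$ and so every orbit closure is compact. Its eigenvalues are $e^{2\pi i j\alpha}$, and each power $T^q$ is again ergodic, so the only $T^q$-invariant functions are constants. Hence for $f=\mathbf 1_{[0,1/2)}$ and $\eta<1/2$ there is no $g$ with $T^q g=g$ and $\lVert f-g\rVert_2<\eta$. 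The sentence in the preliminaries asserting rational spectra, and the paper's choice of $h$ with $T^q h=h$, are the same error, so you cannot rely on them. Irrational eigenvalues have to be handled.

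Your fallback handles them, and the joint equidistribution it needs follows from Lemmas \ref{l:Allq} and \ref{l:irrational} alone.

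Take $g=\sum_{j\le J}c_j\phi_j$ with $T\phi_j=e^{2\pi i\alpha_j}\phi_j$ and $\lVert f-g\rVert_2<\eta$. The subgroup of $\mathbb{R}/\mathbb{Z}$ generated by $\alpha_1,\dots,\alpha_J$ is finitely generated. Its torsion part is cyclic, so it splits as $\langle 1/q\rangle\oplus\langle\beta_1,\dots,\beta_d\rangle$ with the second summand free. Thus $\alpha_j\equiv a_j/q+\sum_i m_{ij}\beta_i$ with integers $a_j, m_{ij}$, and $c/q+m\cdot\beta\notin\mathbb{Q}$ whenever $m\in\mathbb{Z}^d\setminus\{0\}$.

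Now test the sequence $(k_n \bmod q,\,k_n\beta_1,\dots,k_n\beta_d)\in\mathbb{Z}_q\times\mathbb{T}^d$ against the character indexed by $(c,m)$. Its value is $e^{2\pi i k_n(c/q+m\cdot\beta)}$. For $m\neq 0$ the averages tend to $0$ by Lemma \ref{l:irrational}. For $m=0$ they converge to $\sum_a\pi_q(a)e^{2\pi i ac/q}$ by Lemma \ref{l:Allq}. By Weyl's criterion the sequence has limiting distribution $\pi_q\otimes\mathrm{Haar}_{\mathbb{T}^d}$. So the set of $n$ with $k_n\equiv 0\bmod q$ and $\mathrm{dist}(k_n\beta_i,\mathbb{Z})<\delta$ for all $i$ has density $\pi_q(0)(2\delta)^d>0$.

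On this set, $\mathrm{dist}(k_n\alpha_j,\mathbb{Z})\le\delta\sum_i\lvert m_{ij}\rvert$. Hence $\lVert T^{k_n}g-g\rVert_2\le 2\pi\delta\max_j\sum_i\lvert m_{ij}\rvert\,\lVert g\rVert_2$, and also $\lVert T^{sk_n}f-f\rVert_2\le s\lVert T^{k_n}f-f\rVert_2$. Choosing $\eta$ and $\delta$ small, your Steps 2 and 3 go through with this set in place of $\{n: k_n\equiv 0\bmod q\}$, at the cost of a factor $t$ in the error. When $d=0$ this is exactly your original argument.
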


\begin{proof}
Fix $0<\epsilon <1$. 
We can assume that $0\leq f \leq 1$, and is not identically zero. 
Fix $h \in L^\infty$ and integer $q$ 
such that $\lVert f -h \rVert_{t+1} < \epsilon $ and $T^q h=h$. 
Let $Q$ be the collection of integers $n$ such that ${k_n}\equiv 0 \mod q$. 
Then $Q$ has positive  density, as follows from Lemma \ref{l:Allq}.   
And, for $n\in Q$, we have 
\begin{align}
\Bigl\lvert \int 
\prod _{s=0} ^{t} T ^{s k_n } f 
- \prod _{s=0} ^{t}h ^{t+1} 
\;dm \Bigr\rvert 
& = 
\Bigl\lvert \int 
\prod _{s=0} ^{t} T ^{s k_n } f 
- \prod _{s=0} ^{t} T ^{s k_n } h 
\;dm \Bigr\rvert 
\\ 
& \lesssim \epsilon , 
\end{align}
with the implied constant only depending  upon $t$, 
through several applications of the triangle inequality. 
And, by choice of $h$, 
\begin{equation}
 \int h ^{t+1} \;dm \geq 
 \int f ^{t+1}\; dm  - \epsilon . 
 \end{equation}
  That completes the proof. 
\end{proof}

We now address compact extensions.  
 
\begin{theorem} \label{t:compactExtension}
If $\mathcal D$ is an ergodic   factor  
of $(X, \mathcal A, \mathcal T, m)$, which is $CSZ$, and has compact extension $\mathcal D \subset \mathcal E$, then  
$(X, \mathcal E, \mathcal T, m)$
is  $CSZ$ as well.  That is, for all 
non-zero $0\leq f\leq 1$, we have 
\begin{equation}
\liminf_N  \Expectation_N 
\int \prod _{s=0}^{t} T ^{k_n} f\;dm >0. 
\end{equation}
Above $\{k_1<k_2 < \cdots \}$ is a Cantor set $\mathcal K_{b,D}$, with base $b$, digit set $D$ and $0\in D$.  
\end{theorem}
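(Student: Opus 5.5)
We follow Furstenberg's compact-extension argument, with Theorem \ref{thmCvdW} (the Cantor van der Waerden theorem) replacing the classical van der Waerden theorem. We read the statement as the product $\prod_{s=0}^t T^{sk_n} f$.

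\emph{Step 1: reduction to an almost periodic function.} Let $A$ be the support of $f$, with $m(A)>0$. Since $\mathcal E$ is compact over $\mathcal D$, we find a conditionally compact (almost periodic) $g$ with $0\le g\le \mathbf 1_A$, and a set $B\in\mathcal D$ with $m(B)>0$, such that $\mathbb E(g\mid\mathcal D)\ge c>0$ on $B$. This is the standard approximation: replace $\mathbf 1_A$ by an almost periodic function close in $L^2(\mathcal E\mid\mathcal D)$, then truncate and restrict to a $\mathcal D$-set where the conditional norms are controlled. Fix $\eta\ll c$. By conditional compactness there are $g_1,\dots,g_r$ such that for every $n$ and $\mathcal D$-a.e.\ point, $T^n g$ lies within conditional distance $\eta$ of some $g_q$. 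The index $q$ depends measurably on the fiber, and so defines a finite ``coloring'' of $\mathcal D$-fibers.

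\emph{Step 2: coloring argument on the Cantor set.} Fix $y$ in $B$ (a point of $\mathcal D$). For each $k\in\mathcal K$, color $k$ by the $(t+1)$-tuple of indices $q$ approximating $T^{sk}g$ at the fiber over $T^{\cdot}y$. Applying Theorem \ref{thmCvdW} to a suitable shift space built on these finitely many colors, we obtain a length $L$ (depending only on $r,t,\eta$) such that any configuration of $L$ consecutive relevant Cantor elements contains $a,d$ with $d\in\mathcal K\setminus\{0\}$ giving $t+1$ points $T^{a+sd}$ whose approximants coincide. The key step is the following. Because the digit structure lets us write steps as $b^h r_1$ plus small terms, as in Lemma \ref{lem:1}, the resulting $d$ lies in $\mathcal K$. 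Then, on the fiber, $\prod_s T^{sd}g\ge \prod_s(\text{common }g_q)-O(\eta)$, and $\int g_q^{t+1}\,dm$ is bounded below by roughly $c^{t+1}$ on the fiber. This gives
\[
\mathbb E\Bigl(\prod_{s=0}^t T^{sd}g \Bigm| \mathcal D\Bigr)\ge c'>0
\]
on a subset of $B\cap T^{-d}B\cap\cdots\cap T^{-td}B$.

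\emph{Step 3: averaging.} The CSZ property of $\mathcal D$ yields a positive-lower-density set of $n$, and many $y$, with $y, T^{k_n}y,\dots,T^{tLk_n}y\in B$ in a positive measure set. More precisely, we apply CSZ with a longer progression (length about $L(t+1)$) and use Lemma \ref{l:Diff}(ii), so that multiples $jk_n$ with $k_n$ refined to $b^i k_a$ remain of Cantor form. Steps 1 and 2 then produce, for each such $n$ and $y$, some step $d=jk_n\in\{k_n,2k_n,\dots,Lk_n\}$ good at $y$. Pigeonholing over $j\le L$ gives a fixed $j$ that works with positive density and measure. The remaining difficulty is that $jk_n$ must itself be an element $k_{n'}$ of the Cantor set with $n'$ ranging over a set of positive density. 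We handle this by applying CSZ along the sub-Cantor set $\{b^i k\}$ given by Lemma \ref{l:Diff}(ii), and by choosing the van der Waerden step from $\mathcal K$ so that it is $k_{n'}$ directly. Summing over $n'$ then yields $\liminf_N\mathbb E_N\int\prod_s T^{sk_n}f\,dm>0$.

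\textbf{Main obstacle.} The main obstacle is Step 3. Unlike $\mathbb N$, the set $\mathcal K$ is not closed under multiplication by $j$. We must therefore arrange that the step found by the coloring argument is a genuine Cantor element, occurring with positive density in $n$. We would handle this by running the CSZ property and the color-focusing inside a single scale $b^i$, using the additive decompositions $k_{|D|^i n+j}=b^ik_n+k_j$ from Lemma \ref{l:Diff}.
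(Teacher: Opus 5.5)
Your proposal has a genuine gap, and it sits exactly where you flag the main obstacle: Steps 2--3 do not close it. In Furstenberg's scheme the realized step is a product. CSZ for $\mathcal D$ puts the fibres over $y,T^{k_n}y,\dots,T^{Lk_n}y$ in $B$. You then colour $j\in[L]$ by the approximant of $T^{jk_n}g$ at $y$. A monochromatic progression $a+sd$ in $[L]$ therefore yields, after shifting by $ak_n$, a configuration with step $d\,k_n$.

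Whether $d$ comes from the classical or the Cantor van der Waerden theorem, $d\,k_n\notin\mathcal K$ in general. For $\mathcal K=\mathcal K_{3,\{0,2\}}$ one has $2\mathcal K\cap\mathcal K=\{0\}$, even though $2\in\mathcal K$. So pigeonholing to a fixed $j$ cannot help.

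Your appeals to the digit structure do not help either:
\begin{itemize}
\item The decomposition $r=r_0+b^hr_1$ in Lemma \ref{lem:1} only keeps \emph{sums} of Cantor elements with separated digits inside $\mathcal K$, during the proof of Theorem \ref{thmCvdW}. It says nothing about $d\cdot k_n$. Your Step 2 claim that ``the resulting $d$ lies in $\mathcal K$'' confuses $d$ with the realized step.
\item Lemma \ref{l:Diff}(ii) makes $b^ik_a$ a Cantor element, but not $j\,b^ik_a$. So refining the CSZ input to the sub-Cantor set $\{b^ik\}$ changes nothing.
\item Dilation by a power of $b$ would be harmless: $b^ik_n=k_{|D|^in}$ in the indexing of Lemma \ref{l:Diff}, which preserves positive lower density. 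But you cannot force the van der Waerden step to be a power of $b$. For $b\ge 3$, colour $x$ by $x\bmod (b-1)$; since $b^i\equiv 1\pmod{b-1}$, no two integers differing by a power of $b$ share a colour.
\item If you drop the base step and colour $T^mg$ over a single fibre for $m\in[W]$, the van der Waerden step does lie in $\mathcal K$. But you lose the information that the relevant points $T^my$ lie in $B$, which is what bounds $\mathbb E(\cdot\mid\mathcal D)$ from below. Recovering a good progression then needs a \emph{density} version of Cantor--Szemer\'edi in $[W]$, not a colouring version, and that is essentially what is being proved.
\end{itemize}
So ``choosing the van der Waerden step from $\mathcal K$ so that it is $k_{n'}$ directly'' is unsupported, and $\liminf_N\mathbb E_N\int\prod_sT^{sk_n}f\,dm>0$ does not follow.

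For comparison, the paper's proof has the same architecture. It applies the Cantor van der Waerden theorem on $[W]$ to the colouring of $w\in[W]$ by the approximant of $T^{wn}f$, then uses Szemer\'edi's theorem on $\mathcal D$. It too ends with configurations of step $k\cdot n$, with $k\in\mathcal K$ and $n\in\mathbb N$ arbitrary. It uses the classical Szemer\'edi theorem rather than the CSZ property of $\mathcal D$, and never makes the step some $k_{n'}$, so the missing idea is not available there either.

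Your Step 1 is sound: passing to $B=\{\mathbb E(g\mid\mathcal D)\ge c\}$ and fixing $\eta\ll c$ before choosing $L$ is the right way to avoid a circularity in the paper's last step. There, the error $2(t+1)\epsilon$ must beat a Szemer\'edi lower bound for $\prod_{v\in[W]}T^{vn}g^{t+1}$ that depends on $W$, hence on $\epsilon$.

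What you still need is a mechanism that produces, for a set of indices $n'$ of positive lower density, a set of fibres of measure bounded below on which the functions $T^{sk_{n'}}g$, $0\le s\le t$, are simultaneously $O(\eta)$-close. The multiplicative structure of Furstenberg's argument cannot supply this. An argument built on the additive relation $k_{|D|^in+j}=b^ik_n+k_j$, as you suggest at the end, would have to be carried out in full, including the density bookkeeping in $n'$.
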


The proof follows the lines of Furstenberg's original proof, using the van der Waerden Theorem. 
This line of reasoning is available to us as we have established  the Cantor set version of van der Waerden's Theorem.
However, once that step is done, we  appeal to the Szemer\'edi Theorem. 

We need this known Lemma. 

\begin{lemma} Let $\mathcal E\mid \mathcal D$ be a compact extension. 
Then for all $f\in L^2(\mathcal E\mid \mathcal D)$ and $0< \epsilon <1$, there is a set $A \in \mathcal D$ such that $m(A)>1- \epsilon $ and $f \mathbf{1}_{A}$ is conditionally compact. 

\end{lemma}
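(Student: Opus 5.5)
The plan follows the classical argument of Furstenberg, Katznelson and Ornstein. We first reduce to a finite-dimensional situation over $\mathcal D$ in the conditional sense, and then cut out a large $\mathcal D$-set on which the orbit of $f$ stays uniformly close to a finite family of functions.

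\textbf{Step 1: approximation.} By the definition of a compact extension, conditionally compact functions are dense in $L^2(\mathcal E)$. Fix $\epsilon>0$ and choose a conditionally compact $g$ with $\lVert f-g\rVert_{2}<\epsilon^2$.

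\textbf{Step 2: truncation of $g$.} The conditional norm $\lVert g\rVert_{\mathcal E\mid\mathcal D}$ is a $\mathcal D$-measurable function in $L^2(\mathcal D)$, so Chebyshev's inequality applies to it. This gives a constant $M$ and a set
\begin{equation}
A_1=\{\lVert g\rVert_{\mathcal E\mid\mathcal D}\le M\}\in\mathcal D,\qquad m(A_1)>1-\epsilon .
\end{equation}
On $A_1$ we have $g\mathbf 1_{A_1}\in L^2(\mathcal E\mid\mathcal D)$.

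\textbf{Step 3: closeness of $f$ and $g$ on a large set.} Similarly, $\lVert f-g\rVert_{\mathcal E\mid\mathcal D}^2$ has integral less than $\epsilon^4$. So the set $A_2=\{\lVert f-g\rVert_{\mathcal E\mid\mathcal D}<\epsilon\}\in\mathcal D$ satisfies $m(A_2)>1-\epsilon^2$.

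\textbf{Step 4: conditional compactness survives multiplication by $\mathbf 1_A$.} Set $A=A_1\cap A_2$, so $m(A)>1-2\epsilon$. For $F=g\mathbf 1_A$ we have
\begin{equation}
\lVert T^nF-\mathbf 1_{T^{-n}A}g_q\rVert_{\mathcal E\mid\mathcal D}\le \mathbf 1_{T^{-n}A}\,\lVert T^ng-g_q\rVert_{\mathcal E\mid\mathcal D}.
\end{equation}
The right side is small in $L^\infty(\mathcal D)$ by the choice of the $g_q$ for $g$. The obstruction is that the new approximants $\mathbf 1_{T^{-n}A}g_q$ depend on $n$ through the indicator, so there need not be finitely many of them.

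\textbf{Step 5: handling the obstacle.} This $n$-dependence is the main difficulty I expect. I would use the simplest route: first, replace $f$ by $f\mathbf 1_A$, and correspondingly replace $g$ by $g\mathbf 1_A$. Second, invoke the finite-rank characterization of compact extensions. Compactness of $\mathcal E\mid\mathcal D$ means $L^2(\mathcal E)$ is spanned by $T$-invariant finite-rank $L^2(\mathcal D)$-submodules. So $g$ may be taken as a finite $L^\infty(\mathcal D)$-combination of the generators $\phi_1,\dots,\phi_r$ of such a module, with unitary cocycle action $T\phi_i=\sum_j c_{ij}\phi_j$, where the $c_{ij}$ are $\mathcal D$-measurable. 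Third, on the set where the coefficients are bounded by $M$ (again a Chebyshev set in $\mathcal D$), the orbit $\{T^n(g\mathbf 1_A)\}$ lies within the bounded set of combinations $\sum_i a_i\phi_i$ with $|a|\le M$. Fourth, because the action is unitary on fibers, covering the ball of radius $M$ in $\mathbb C^r$ by finitely many $\epsilon$-balls yields finitely many constant-coefficient functions $g_q$. These $g_q$ approximate every $T^n(g\mathbf 1_A)$ uniformly in the conditional norm. This gives the required $\epsilon$-net uniformly in $n$.

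If the paper's definition of compact extension is taken only as the density statement, this finite-rank structure itself needs justification. I would cite it as the standard equivalence (Furstenberg's book) rather than reprove it.
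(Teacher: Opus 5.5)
The paper quotes this lemma as known and gives no proof, so I am judging your argument against the standard one. It has a genuine gap: it works at only one scale. Steps 1--3 fix a single conditionally compact $g$ and produce $A$ with $\lVert f\mathbf 1_A-g\mathbf 1_A\rVert_{\mathcal E\mid\mathcal D}<\epsilon$ a.e. Step 5, at best, shows that $g\mathbf 1_A$ is conditionally compact. That only places $f\mathbf 1_A$ within $\epsilon$ of a conditionally compact function. Conditional compactness of $f\mathbf 1_A$ needs finite $\delta$-nets for \emph{every} $\delta>0$ with the \emph{same} set $A$, and in your scheme shrinking $\delta$ changes $A$. The missing step is a diagonal (Egorov-type) choice. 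Take conditionally compact $g_k$ with $\lVert f-g_k\rVert_2^2<\epsilon\,8^{-k}$ and set
\begin{equation}
A=\bigcap_{k\geq 1}\bigl\{\lVert f-g_k\rVert_{\mathcal E\mid\mathcal D}<2^{-k}\bigr\}\in\mathcal D .
\end{equation}
Chebyshev gives $m(X\setminus A)<\sum_k 4^k\epsilon\,8^{-k}=\epsilon$, and $\lVert f\mathbf 1_A-g_k\mathbf 1_A\rVert_{\mathcal E\mid\mathcal D}\leq 2^{-k}$ a.e. Each $g_k\mathbf 1_A$ is conditionally compact (see below). A limit of conditionally compact functions in this $L^\infty(\mathcal D)$-of-conditional-norm sense is again conditionally compact. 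The error is uniform in $n$ because $\lVert T^n u\rVert_{\mathcal E\mid\mathcal D}=T^n\lVert u\rVert_{\mathcal E\mid\mathcal D}$, as $\mathcal D$ is $T$-invariant.

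The obstacle you identify in Step 4 is not real once the minimum over the net is taken pointwise in $x$. That is how the paper uses conditional compactness in the proof of Theorem \ref{t:compactExtension}. Since $\mathbf 1_{T^{-n}A}$ is $\mathcal D$-measurable and $\{0,1\}$-valued, $\lVert T^n(g\mathbf 1_A)-h\rVert_{\mathcal E\mid\mathcal D}$ equals $\lVert T^n g-h\rVert_{\mathcal E\mid\mathcal D}$ on $T^{-n}A$ and $\lVert h\rVert_{\mathcal E\mid\mathcal D}$ off it. So the enlarged net $\{g_1,\dots,g_r,0\}$ works, and neither the truncation of Step 2 nor the finite-rank module machinery of Step 5 is needed. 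Step 5 also chooses the point of the $\epsilon$-net in $\mathbb C^r$ depending on $x$, so it too yields only the pointwise notion.

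With $\min_q$ outside the $L^\infty(\mathcal D)$ norm, as in the paper's displayed definition, no argument can succeed, because the lemma is then false. Take $\mathcal E=\mathcal D$ the Borel sets of the circle under an irrational rotation; this is compact in that sense, since continuous functions have totally bounded orbits in sup norm. Take $f=\mathbf 1_B$ with $B$ an arc. Then $\mathbf 1_{A\cap B}$ could have only finitely many distinct translates. That forces $A\cap B$ to be invariant under an irrational rotation, which is impossible when $0<m(A\cap B)<1$.
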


\begin{proof}[Proof of Theorem \ref{t:compactExtension}]
Fix integer $t$, Cantor set $\mathcal K$, non-zero function  $0\leq f \leq 1$ in $L^2(\mathcal E \mid \mathcal D)$.   
We can proceed under the assumption that $f$ is conditionally compact.  
For if not, fix  $0<\epsilon <1$ so small 
that 
\begin{equation}
m(A) > 1- \epsilon 
\quad \textup{implies} \quad 
\int_A f \;dm > 0. 
\end{equation}
Then, let $A$ be as in the previous Lemma, so that $f \mathbf{1}_A$ is conditionally compact. 
It clearly suffices to prove the theorem for $f \mathbf{1}_A$. 

\smallskip 
We proceed with a choice of $\epsilon = \epsilon (t,f)$, which will need to be sufficiently small.    
The function $f$ is conditionally compact, so we can a finite collection of functions $f_1 , \ldots,  f_L$  so that for almost all $x$, 
\begin{equation}
\max_{n \in \mathbb{N} }
\min_{1\leq \ell\leq L} 
\lVert T^n f - f_\ell  \rVert 
_{\mathcal E \mid \mathcal D }(x) < \epsilon . 
\end{equation}

From the Cantor van der Waerden 
Theorem \ref{thmCvdW}, we have integer $W$ 
for which any $L$-coloring of the integers $[W]$ contains a monochromatic progression of length $t$, with difference  in $\mathcal K$.  

For $x\in X$,  integers $w \in [W]$, and $n\in \mathbb N$, we induce a coloring on the integers by assigning  integer $n$ color $1\leq \ell \leq L$  if $\ell $ is the smallest integer such that 
$\lVert T^{wn} f - f_\ell  \rVert 
_{\mathcal E \mid \mathcal D }(x) < \epsilon$.

Then, we can select  color $\ell(x)$, and  first integer in the progression $w_0(x) \in [W]$, and non-zero $k\in \mathcal K$
so that  $w_0(x)+t k(x)\leq W$, 
and  
\begin{equation}
 \lVert T^{(w_0 + sk)n} f - f_{\ell}  \rVert _{\mathcal E\mid \mathcal D} (x) < \epsilon , \qquad 0\leq s \leq t.    
 \end{equation}
 The functions $w_0$, $k$ and $\ell$ 
 are all $\mathcal D$ measurable, as the conditional norms are $\mathcal D$ measurable. 

From this point, we move to a position in which we can apply the (classical) Szemeredi Theorem  (for a dramatically longer progression).   The last equation and the conditional triangle inequality imply that 
\begin{equation}
 \lVert T^{(w_0 + sk)n} f -  T^{w_0n} f  \rVert _{\mathcal E\mid \mathcal D} (x) < 2\epsilon , \qquad 0\leq s \leq t.    
 \end{equation}
Recalling that all functions are bounded by $1$, as is the conditional norm, we can iteratively use the conditional Cauchy-Schwarz inequality to bound 
\begin{align}
    \mathbb E 
\Bigl[ 
    \prod _{s=0}^t 
    T^{(w_0 + sk) n} f \mid \mathcal D 
\Bigr] 
\geq \mathbb E \bigl( (T^{w_0n} f \bigr)^{t+1} \mid \mathcal D \bigr) 
- 2(t+1) \epsilon . 
\end{align}
Setting $g = \mathbb E (f \mid \mathcal D)$, the conditional Jensen inequality implies that, again pointwise, 
\begin{align}
    \mathbb E \bigl( (T^{w_0 n} f \bigr)^{t+1} \mid \mathcal D \bigr) 
    & \geq (T^{w_0n} g )^{t+1} 
    \\ 
    & \geq \prod _{v\in [W]} 
    T ^{vn} g ^{t+1} ,
\end{align}
with the last inequality holding as $0\leq g \leq 1$, and $w_0= w_0(x)\in [W]$. 

But now, we are in a position to apply Szemeredi's Theorem 
on the system $\mathcal D$. 
  The function $0\leq g \leq 1$ is non-zero, and $W$ is finite, hence 
\begin{equation}
    \liminf_N 
    \Expectation_N 
     \prod _{v\in [W]} 
    T ^{vn} g ^{t+1} >0. 
\end{equation}
This completes the proof.

\end{proof}

\bibliography{bibliography}

% section quantitative_bound (end)

\end{document}